\numberwithin{equation}{section}       
\theoremstyle{plain}
\newtheorem{theo}{Theorem}
\newtheorem{prop}{Proposition}[section]
\newtheorem{lemm}[prop]{Lemma}
\theoremstyle{definition}
\newtheorem{defi}[prop]{Definition}
\theoremstyle{remark}
\newtheoremstyle{citing}
  {3pt}
  {3pt}
  {\itshape}
  {}
  {\bfseries}
  {.}
  {.5em}
  {\thmnote{#3}}
\theoremstyle{citing}
\newcommand{\N}{\mathbb{N}}
\newcommand{\R}{\mathbb{R}}
\newcommand{\cO}{\mathcal{O}}
\newcommand{\cS}{\mathcal{S}}
\newcommand{\tV}{\widetilde{V}}
\newcommand{\teta}{\widetilde{\teta}}
\renewcommand{\=}{ : = }
\DeclareMathOperator{\dist}{dist}
\DeclareMathOperator{\Crit}{Crit} 
\newcommand{\ulx}{\underline{x}}
\newcommand{\ula}{\underline{a}}
\newcommand{\uli}{\underline{i}}
\tikzset{
  declare function={
    sgn(\x) = (and(\x<0, 1) * -1) +
    (and(\x>0, 1) * 1) +
    (and(\x==0, 1) * 0);
  }
}
\begin{document}

\title{invariant measures for interval maps without lyapunov exponents}
\author{Jorge Olivares-Vinales} \thanks{This article was partially supported by ANID/CONICYT doctoral fellowship 21160715. } 
\address{Department of Mathematics, University of Rochester. Hylan Building, Rochester, NY 14627, U.S.A. \newline \indent
  Departamento de Ingenier\'ia Matem\'atica, Universidad de Chile, Beauchef 851, San-tiago, Chile.}
\email{jolivar2@ur.rochester.edu}

\begin{abstract}
  We construct an invariant measure for a piecewise analytic interval map whose Lyapunov exponent is
  not defined. Moreover, for a set of full measure, the pointwise Lyapunov exponent is not
  defined. This map has a Lorenz-like singularity and non-flat critical points.
\end{abstract}

\maketitle

\section{Introduction}
Lyapunov exponents play an important role in the study of the ergodic behavior of dynamical systems. In particular, in the seminal work of Pesin  (referred to as ``Pesin Theory''), the
existence and positivity of Lyapunov exponents were used to study the dynamics of non-uniformly hyperbolic systems, see for example \cite[Supplement]{HaKa95}. Using these
ideas, Ledrappier \cite{Led81} studied ergodic properties of absolutely continuous invariant measures for regular maps of the interval under the assumption that the Lyapunov
exponent exists and is positive. Recently Dobbs \cite{Dob14}, \cite{Dob15} developed the Pesin theory for noninvertible interval maps with Lorenz-like singularities and non-flat
critical points.  Lima \cite{Lim18} constructs a symbolic extension for these maps that code the measures with positive Lyapunov exponents.

In the case of continuously differentiable interval maps, Przytycki proved that ergodic invariant measures have nonnegative Lyapunov exponent, or they are supported on a strictly
attracting periodic orbit of the system. Moreover, there exists a set of full measure for which the pointwise Lyapunov exponent exists and is nonnegative, see \cite{Prz93},
\cite[Appendix A]{RL20}. 

In this paper, we show that the result above cannot be extended to continuous piecewise differentiable
interval maps with a finite number of non-flat critical points and Lorenz-like singularities. In 
particular,
we construct a measure for a unimodal map with a Lorenz-like singularity and two non-flat critical points for which the Lyapunov exponent does not exist.
Moreover, for this map, the pointwise Lyapunov exponent does not exist for a set of full measure. Thus, our example shows
that the techniques developed by Dobbs \cite{Dob14}, \cite{Dob15}, and Lima, \cite{Lim18}, cannot be extended to all maps
with critical points and Lorenz-like singularities.


Maps with Lorenz-like singularities are of interest since they appear in the study of
the Lorenz attractor, see \cite{GW79}, \cite{LuTu99}, and references therein. Apart from these motivations, these types of maps are of interest on their own since the
presence of these types of singularities create expansion, and hence enforce the chaotic behavior of the system, see \cite{ALV09}, \cite{LM13}, \cite{Dob14},  and references therein.

Additionally, the unimodal map that we consider has Fibonacci recurrence of the turning point 
(or just Fibonacci recurrence).
Maps with Fibonacci recurrence first appeared in the work of Hofbauer and Keller \cite{HK90} as possible interval maps having a wild attractor. Lyubich and Milnor \cite{LyMi93} proved that unimodal maps with a quadratic critical point and Fibonacci
recurrence do not only have any Cantor attractor but also have a finite absolutely continuous invariant measure, see also \cite{KN95}. Finally, Bruin, Keller,
Nowicki, and van Strien \cite{BKNvS96} proved that a $C^2-$unimodal interval map with a critical point of order big enough and with Fibonacci recurrence
has a wild Cantor attractor. On the other hand, in the work of 
Branner and Hubbard \cite{BH92}, in the case of complex cubic polynomials, and the work of Yoccoz, in the case of complex quadratic
polynomials, Fibonacci recurrence appeared  as the worst pattern of recurrence, see for example
\cite{Hub93}, and \cite{Mil92}.  Maps with Fibonacci recurrence also play an important role in the renormalization theory, see for example \cite{Sm07}, \cite{LeSw12}, \cite{GS18}, and
references therein. 

\subsection{Statement of results}
In order to state our main result, we need to recall some definitions.  A continuous map 
$f \colon [-1,1] \to [-1,1] $ is \emph{unimodal} if there is 
$c \in (-1,1)$ such that $f|_{[-1,c)}$ is increasing and $f|_{(c,1]}$ is decreasing. We call $c$ the \emph{turning point of} $f$. For every $A \subset [-1,1]$ and every $x \in [-1,1]$, we denote the \emph{distance from $x$ to $A$} by 
\[ \dist(x,A) \= \inf \{ |x-y| \colon y \in A \}. \] 
We will use $f'$ to denote the derivative of 
$f$. We will say that the point $c \in [-1,1]$ is a \emph{Lorenz-like singularity} if there exists $\ell^+$ and $\ell^-$ in $ (0,1)$, $L > 0$, and $\delta > 0$ such that the
following holds: For every $x \in (c, c + \delta)$
\begin{equation}
  \label{eq:right_lo-like}
  \frac{1}{L |x - c|^{\ell^+}} \leq |f'(x)| \leq  \frac{L}{ |x - c|^{\ell^+}},
\end{equation}
and for every $x \in (c - \delta,c)$
\begin{equation}
  \label{eq:left_lo-like}
  \frac{1}{L |x - c|^{\ell^-}} \leq |f'(x)| \leq \frac{L}{ |x - c|^{\ell^-}}.
\end{equation}
We call $\ell^+$ and $\ell^-$ the \emph{right and left order of} $c$ respectively.
For an interval map $f$, 
a point $\hat{c} \in [-1,1]$ is called a \emph{critical point} if $f'(\hat{c}) = 0$. We will say that a critical point $\hat{c}$ is \emph{non-flat} if there
exist $\alpha^+ > 0$, $\alpha^- >0$, $M >0$, and $\delta >0$ such that the following holds: \newline \noindent
For every $x \in (\hat{c}, \hat{c} + \delta)$
\begin{equation}
  \label{eq:right_crit_point}
  \left| \log \frac{|f'(x)|}{|x - \hat{c}|^{\alpha^+}} \right| \leq M,
\end{equation}
and for every $x \in (\hat{c} - \delta, \hat{c})$
\begin{equation}
  \label{eq:left_crit_point}
  \left| \log \frac{|f'(x)|}{|x - \hat{c}|^{\alpha^-}} \right| \leq M.
\end{equation}
We call $\alpha^+$ and $\alpha^-$ the \emph{right and left order of} $\hat{c}$ respectively.
Let us denote by $\Crit(f)$ the set of critical points of $f$. If $f$ is a unimodal map with turning point~$c$, we will use the notation
$\cS(f) \= \Crit(f) \cup \{ c \}$.
Let us denote by $C^{\omega}$ the class of analytic maps. Here
we will say that $f$ is a \emph{$C^{\omega}$-unimodal map} if it is of class $C^{\omega}$ outside $\cS(f)$. 

We denote the orbit of $x \in [-1,1]$ under $f$ by \[ \cO_f(x) \= \{ f^n(x) \colon n \geq 0 \}. \]
For a probability measure $\mu$ on $[-1,1]$ that is invariant by $f$, we define the 
\emph{pushforward of $\mu$ by $f$} as \[ f_*\mu \= h \circ f^{-1}.\] Denote by 
\[ \chi_{\mu}(f) \= \int \log |f'| d\mu,\] its \emph{Lyapunov exponent}, if the integral exists. Similarly, for every $x \in [-1,1]$, such that $\cO_f(x) \cap \cS(f) = \emptyset$, denote by
\[ \chi_f(x) \= \lim_{n \to \infty} \frac{1}{n} \log|(f^n)'(x)|, \] the \emph{pointwise Lyapunov exponent of $f$ at $x$}, if the limit exists.

Let $\lambda_F \in (0,2]$ be so that the map $T_{\lambda_F}\colon [-1,1] \to [-1,1]$, defined by
\begin{equation}
  \label{eq:tent_map}
 T_{\lambda_F}(x) \= \lambda_F(1 - | x |) -1,
\end{equation}
for every $x \in [-1,1]$, has Fibonacci recurrence and let $\mu_P$ be the unique measure that is
ergodic, 
invariant by $T_{\lambda_F}$, and supported on $\overline{\cO_{T_{\lambda_F}}(0)}$,  see \S \ref{FTM}.

\begin{theo}
  \label{thm:1}
  Let $h \colon [-1,1] \to [-1,1]$ be a homeomorphism of class $C^\omega$ on \newline $[-1,1]~\setminus\{0\}$ with a unique non-flat critical point at $0$, and put $\tilde{\mu}_P \= h_*\mu_P$.
  Then the $C^\omega$-unimodal map
  $f~\=~h~\circ~T_{\lambda_F}~\circ~h^{-1} $ has a Lorenz-like singularity at $\tilde{c} \= h(0)$ and
  is so that:
  \begin{enumerate}
  \item $\chi_{\tilde{\mu}_P}(f) $ is not defined.
  \vspace{0.3cm}
  \item For $x \in \overline{\cO_f(\tilde{c})}$, the pointwise Lyapunov exponent of $f$ at $x$ does
  not exist if $\cO_f(x) \cap \cS(f) = \emptyset$, and it is not defined if 
  $\cO_f(x) \cap \cS(f) \neq \emptyset$.
  \vspace{0.3cm}
  \item $\log(\dist(\cdot,\cS(f))) \notin L^1(\tilde{\mu}_P).$
  \vspace{0.3cm}
  \item $f$ has \emph{exponential recurrence} of the Lorenz-like singularity orbit, thus, 
        \begin{equation*}
            \limsup_{n \to \infty} \frac{ -\log |f^n(\tilde{c}) - \tilde{c}|}{n} \in (0, +\infty).
        \end{equation*}
  \end{enumerate}
\end{theo}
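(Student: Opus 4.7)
The plan is to reduce everything, via the topological conjugacy $f = h \circ T_{\lambda_F} \circ h^{-1}$, to statements about the Fibonacci tent map $T_{\lambda_F}$, its invariant measure $\mu_P$, and the behavior of $h$ near its non-flat critical point at $0$. By the chain rule, for every $x$ with $y = h^{-1}(x)$,
\[
|(f^n)'(x)| = \lambda_F^{n} \cdot \frac{|h'(T_{\lambda_F}^n(y))|}{|h'(y)|}, \qquad \frac{1}{n}\log|(f^n)'(x)| = \log\lambda_F + \frac{\log|h'(T_{\lambda_F}^n(y))|}{n} - \frac{\log|h'(y)|}{n}.
\]
Since $0$ is a non-flat critical point of $h$ (say of order $\alpha>0$), $\log|h'(z)| = \alpha\log|z| + O(1)$ as $z \to 0$, $\log|h'|$ is bounded on compact subsets of $[-1,1]\setminus\{0\}$, and $|h(z)-h(0)| \asymp |z|^{\alpha+1}$ near $0$. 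These comparisons translate every statement of the theorem into an assertion about the orbit of $0$ under $T_{\lambda_F}$ and the mass of $\mu_P$ near $0$.

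Part (4) is the cleanest: the Fibonacci recurrence of $T_{\lambda_F}$ forces the closest returns of $0$ to occur at the Fibonacci times $\{F_n\}$ with $-\log|T_{\lambda_F}^{F_n}(0)|$ growing at a linear rate in $F_n$, while at other times the orbit stays bounded away from $0$ by a positive constant. Combined with the golden-ratio asymptotic $F_{n-1}/F_n\to\phi^{-1}$, this yields $\limsup_n \tfrac{-\log|T_{\lambda_F}^n(0)|}{n}\in(0,+\infty)$, and (4) follows from $|f^n(\tilde{c})-\tilde{c}| \asymp |T_{\lambda_F}^n(0)|^{\alpha+1}$.

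For (2), take $x\in\overline{\cO_f(\tilde{c})}$ with $\cO_f(x)\cap\cS(f)=\emptyset$; then $y=h^{-1}(x)\in\overline{\cO_{T_{\lambda_F}}(0)}$ avoids $0$ and its critical preimages $\pm(\lambda_F-1)/\lambda_F$ (whose $h$-images comprise $\Crit(f)$, as seen from the derivative formula). The limit $\tfrac{1}{n}\log|(f^n)'(x)|$ exists iff $\tfrac{1}{n}\log|h'(T_{\lambda_F}^n(y))|$ does, since $\tfrac{1}{n}\log|h'(y)|\to 0$. Along a Fibonacci-return subsequence the orbit of $y$ inherits the exponential approach of the orbit of $0$ to $0$, so this quantity has a negative non-zero limit; along a subsequence of times where $T_{\lambda_F}^n(y)$ stays in a fixed compact subset of $[-1,1]\setminus\{0\}$, it tends to $0$. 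The two subsequential limits differ, so the pointwise Lyapunov exponent does not exist. If instead $\cO_f(x)$ meets $\cS(f)$, some iterate $(f^n)'(x)$ is zero or infinite and the logarithm is undefined from that step onwards.

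For (1), by ergodicity of $\tilde{\mu}_P$ (inherited from $\mu_P$ via the homeomorphism $h$), if $\chi_{\tilde{\mu}_P}(f)$ were defined in $\R\cup\{\pm\infty\}$ then at least one of $\int(\log|f'|)^{\pm}d\tilde{\mu}_P$ would be finite, and the Birkhoff ergodic theorem in its one-sided integrable form would produce an almost-sure limit for $\tfrac{1}{n}\log|(f^n)'|$, contradicting (2). For (3), $-\log\dist(x,\cS(f)) \geq -\log|x-\tilde{c}|$, so the conjugation reduces the claim to $\int-\log|y|\,d\mu_P(y) = +\infty$. The layer-cake identity, a Kac-type mass estimate $\mu_P(B(0,d_n))\asymp 1/F_n$ (where $d_n=|T_{\lambda_F}^{F_n}(0)|$, using that the first-return time to $B(0,d_n)$ is $F_n$), and the depth bound from part (4) make each Fibonacci shell $\{d_{n+1}\leq|y|\leq d_n\}$ contribute an amount bounded below by a positive constant, and the series diverges. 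The main technical obstacle lies here: establishing the depth bound $-\log d_n \asymp c\,F_n$, the Kac-type estimate $\mu_P(B(0,d_n))\asymp 1/F_n$, and the compatibility of these estimates for a generic orbit in the Cantor minimal set (needed in (2))—properties of $T_{\lambda_F}$ and $\mu_P$ presumably assembled in the section preceding the theorem.
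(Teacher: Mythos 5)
Your proposal is essentially correct in outline and relies on the same structural ingredients the paper develops (the chain-rule identity $|(f^n)'(x)|=\lambda_F^n\,|h'(T^n(y))|/|h'(y)|$, the diameter estimate $|T^{S(k)}(0)|\asymp\lambda^{-S(k+1)}$ of Lemma~\ref{lemm:diam_estimate}, the close-return combinatorics of \S3, and the mass estimates $\mu_P(I_m)=\varphi^{-m}$ of Lemma~\ref{lemm:measure_estimate}), but you route two of the items differently. For item~(1), you deduce non-definedness from item~(2) via the one-sided Birkhoff theorem (if $\int\log|f'|^+$ or $\int\log|f'|^-$ were finite, Birkhoff would force an a.e.\ limit of $\frac1n\log|(f^n)'|$, contradicting~(2) on a full-measure set since the exceptional orbits form a countable, hence $\tilde\mu_P$-null, set). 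This is a legitimately different and shorter argument than the paper's Proposition~\ref{prop:1}, which computes both $\int\log|f'|^+=+\infty$ and $\int\log|f'|^-=-\infty$ directly; the paper's computation is heavier but is then reused to prove item~(3). For item~(4), you use the direct two-sided bound $|h(z)-h(0)|\asymp|z|^{\alpha+1}$ on $|f^n(\tilde c)-\tilde c|$, while the paper (Proposition~\ref{prop:3}) deliberately avoids this and instead applies the mean value theorem on the annuli $A_k^\pm$. The paper's stated reason is lack of control on how small $|h'|$ can be; this is really a pitfall of applying the MVT on the full interval $D_k^\pm$, not of the integral bound you invoke (which the paper itself asserts inside the proof of Proposition~\ref{prop:0}), so your shorter route is sound. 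For item~(3), your ``Kac-type'' estimate $\mu_P(B(0,d_n))\asymp 1/F_n$ is the intuitive reading of Lemma~\ref{lemm:measure_estimate}, and the shell decomposition matches the paper's use of the $J_n$.

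Two minor inaccuracies to flag. First, your claim that ``at other times the orbit stays bounded away from $0$ by a positive constant'' is false as stated --- the orbit of $0$ accumulates at $0$; the correct statement, which is what you actually need and is Lemma~\ref{lemm:orbit_order}, is that for $0<n<S(k)$ with $n\neq S(k-1)$ one has $|T^n(0)|>|T^{S(k-1)}(0)|$, so non-Fibonacci times never beat the most recent Fibonacci return and the $\limsup$ in~(4) is attained along the Fibonacci subsequence. Second, the step ``the orbit of $y$ inherits the exponential approach of the orbit of $0$'' is the genuinely technical core of item~(2); it is precisely Lemma~\ref{lemm:time_bound}, which gives, for every $y\in\overline{\cO_T(0)}$, a close-return subsequence $n_i$ with $T^{n_i}(y)\in I_{k+i}$ and $S(k+i)-S(k)\le n_i\le S(k+i+2)-S(k+2)$; your sketch takes this for granted.
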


The map $f$ in Theorem \ref{thm:1} has a Lorenz-like singularity at $\tilde{c}$ and two non-flat critical points, given by
the preimages by $f$ of the Lorenz-like singularity $\tilde{c}$,
see Proposition \ref{prop:0}. Since $h$ is a homeomorphism of
class $C^{\omega}$ on $[-1,1] \setminus \{ 0 \}$, these are non-flat critical points of inflection type.

Dobbs constructed an example of a unimodal map with a flat critical point and singularities at the boundary, for which 
the Lyapunov exponent of an invariant measure does 
not exist, see \cite[Proposition 43]{Dob14}. 
For interval maps with infinite Lyapunov exponent see \cite[Theorem A]{Ped20}, and references therein.

The negation of item (3) in Theorem \ref{thm:1} is considered in several works as a regularity
condition to study ergodic invariant measures. In \cite{Lim18}, Lima studied measures satisfying this condition for
interval maps with critical points and discontinuities, he called measures satisfying this condition \emph{$f-$adapted}. 
By the Birkhoff ergodic theorem, if $\log(\dist(\cdot,\cS(f))) \in L^1(\mu)$, then for an ergodic 
invariant measure $\mu$, we have \[\lim_{n \to \infty} \frac{1}{n} \log(\dist(f^n(x),\cS(f))) = 0,\] $\mu-$a.e.
Ledrappier called measures satisfying this last condition \emph{non-degenerated}, for interval maps with a finite number
of critical points, see \cite{Led81}. The measure $\tilde{\mu}_P$ does not satisfy the non-degenerated condition.
For more results related to this condition see \cite{Lim18} and references therein.

For continuously differentiable interval maps with a finite number of critical points, every ergodic 
invariant measure that is not supported on an attracting periodic point satisfies 
$\lim_{n \to \infty} \frac{1}{n} \log(\dist(f^n(x),\cS(f))) = 0$, a.e., see \cite{Prz93} and 
\cite[Appendix]{RL20}. Item (3) in Theorem \ref{thm:1} tells us that we cannot extend this to
piecewise differentiable maps with a finite number of critical points and Lorenz-like singularities.

Item (4) in Theorem \ref{thm:1} stresses important information relative to 
the recurrence of the Lorenz-like singularity. This item represents a crucial difference between
smooth  interval maps, and the case of interval maps with critical points and Lorenz-like singularities. In the smooth
case, certain conditions
on the growth of the derivative restrict the recurrence to the critical set, see for example
\cite{CE83}, \cite{Tsu93}, \cite{GS14}, and references therein.
Using  the terminology in \cite{DPU96}, item (4) shows that Rule I is sharp for the map $f$ in Theorem \ref{thm:1}.
Finally, we have that the map $f$ in Theorem 
\ref{thm:1} satisfies Tsuji's weak regularity condition, due to the interaction between the critical points and
the Lorenz-like singularity.

\subsection{Example} Now we will provide an example of a map $f$ as in Theorem \ref{thm:1}. Fix $\ell^+$ and $\ell^-$ in $(0,1)$. Put $\alpha^+ \= \frac{1}{1-\ell^+}$ and
$\alpha^- \= \frac{1}{1-\ell^-}$. Define  \[h_{\alpha^+,\alpha^-} \colon [-1,1] \longrightarrow [-1,1]\]
as
\begin{equation}
  \label{eq:homeo}
  h_{\alpha^+,\alpha^-}(x) =
  \begin{cases}
    |x|^{\alpha^+} & \text{if $x \geq 0$} \\
    -|x|^{\alpha^-} & \text{if $x < 0$}. 
  \end{cases}
\end{equation}
So
\begin{equation}
  \label{eq:inv_homeo}
  h^{-1}_{\alpha^+,\alpha^-}(x) =
  \begin{cases}
    |x|^{1/\alpha^+} & \text{if $x \geq 0$} \\
    -|x|^{1/ \alpha^-} & \text{if $x < 0$}. 
  \end{cases}
\end{equation}
Then by \eqref{eq:tent_map}, \eqref{eq:homeo},\eqref{eq:inv_homeo}, and the chain rule, we have
\begin{eqnarray}
  f'(x) &=&\lambda_F  \frac{h'_{\alpha^+,\alpha^-} (T_{\lambda_F} (h^{-1}_{\alpha^+,\alpha^-}(x)))}{h'_{\alpha^+,\alpha^-}(h^{-1}_{\alpha^+,\alpha^-}(x))} \nonumber
\end{eqnarray}
for every $x \in [-1,1] \setminus \{ h_{\alpha^+,\alpha^-}(0) \}.$
The function $h'_{\alpha^+,\alpha^-} (T_{\lambda_F} (h^{-1}_{\alpha^+,\alpha^-}(x)))$ is bounded for $x$ close enough to $0$, see \S 5. Then, by \eqref{eq:homeo} and
\eqref{eq:inv_homeo}, there exists $L>0$
such that for every $x \in (h(0),h(\delta))$,
\[ \frac{1}{L |x|^{\ell^+}} \leq |f'(x)| \leq \frac{L}{|x|^{\ell^+}}, \]and for every $x \in (h(0),h(-\delta))$,
\[ \frac{1}{L |x|^{\ell^-}} \leq |f'(x)| \leq \frac{L}{|x|^{\ell^-}}. \]  Thus, $h(0)$ is a Lorenz-like singularity of $f$, see Figure~\ref{fig:1}. Also, by
\eqref{eq:homeo} and \eqref{eq:inv_homeo}, if $\delta$ is small enough so that $T^{-1}_{\lambda_F}(0) \cap (-\delta, \delta) = \emptyset$, the two critical points
of $f$ are non-flat. The one to the left of $h(0)$ has right order $\alpha^+$ and left order $\alpha^-$, and the one to the right of $h(0)$ has right order $\alpha^-$ and left
order $\alpha^+.$

\begin{figure}[h!]
  \centering
  \begin{subfigure}[b]{.3 \textwidth}
    \begin{tikzpicture}[line cap=round,line join=round,>=triangle 45,x=1.6cm,y=1.6cm]
      \clip(-1.3,-1.3) rectangle (1.3,1.3);
      \draw (1,1)-- (-1,1);
      \draw (-1,1)-- (-1,-1);
      \draw (-1,-1)-- (1,-1);
      \draw (1,-1)-- (1,1);
      \draw [dash pattern=on 1pt off 1pt] (-1,-1)-- (1,1);
      \draw (-1.2,1.08) node[anchor=north west] {1};
      \draw (1,-1.) node[anchor=north west] {1};
      \draw (-1.2,-1.) node[anchor=north west] {-1};
      \draw[line width=0.8,smooth,samples=100,domain=-1.0:1.0] plot(\x,{1.73*(1 - abs(\x)) -1});
    \end{tikzpicture}
    \caption{} 
  \end{subfigure}
  \begin{subfigure}[b]{.3 \textwidth}
    \begin{tikzpicture}[line cap=round,line join=round,>=triangle 45,x=1.6cm,y=1.6cm]
      \clip(-1.3,-1.3) rectangle (1.3,1.3);
      \draw (1,1)-- (-1,1);
      \draw (-1,1)-- (-1,-1);
      \draw (-1,-1)-- (1,-1);
      \draw (1,-1)-- (1,1);
      \draw [dash pattern=on 1pt off 1pt] (-1,-1)-- (1,1);
      \draw (-1.2,1.08) node[anchor=north west] {1};
      \draw (1,-1.) node[anchor=north west] {1};
      \draw (-1.2,-1.) node[anchor=north west] {-1};
      \draw[line width=0.8,smooth,samples=100,domain=-1.0:0.0] plot(\x,{-abs(\x)^(1.5)});
      \draw[line width=0.8,smooth,samples=100,domain=.0:1.0] plot(\x,{abs(\x)^(4)});
    \end{tikzpicture}
    \caption{}
  \end{subfigure}
  \begin{subfigure}[b]{.3 \textwidth}
    \begin{tikzpicture}[line cap=round,line join=round,>=triangle 45,x=1.6cm,y=1.6cm]
      \clip(-1.3,-1.3) rectangle (1.3,1.3);
      \draw[line width=0.8, smooth,samples=100,domain=0.18:1.0] plot(\x,{0-abs(1.73*(1-abs((\x))^(1/2))-1)^1.2});
      \draw[line width=0.8, smooth,samples=100,domain=0.0:0.18] plot(\x,{abs(1.73*(1-abs((\x))^(1/2))-1)^2});
      \draw[line width=0.8, smooth,samples=100,domain=-1.0:-0.3550] plot(\x,{0-abs(1.73*(1-abs((\x))^(1/1.2))-1)^1.2});
      \draw[line width=0.8, smooth,samples=100,domain=-0.345:0.0] plot(\x,{abs(1.73*(1-abs((\x))^(1/1.2))-1)^2});
      \draw (1,1)-- (-1,1);
      \draw (-1,1)-- (-1,-1);
      \draw (-1,-1)-- (1,-1);
      \draw (1,-1)-- (1,1);
      \draw [dash pattern=on 1pt off 1pt] (-1,-1)-- (1,1);
      \draw (-1.2,1.08) node[anchor=north west] {1};
      \draw (1,-1.) node[anchor=north west] {1};
      \draw (-1.2,-1.) node[anchor=north west] {-1};
    \end{tikzpicture}
    \caption{}
  \end{subfigure}
  \caption{Graphics of the functions $T_{\lambda_F}(x)$ (Figure (A)), $h_{\alpha}(x)$ for $\alpha^+= 2$ and $\alpha^- = 1.2$ (Figure (B)) and $f(x)$ (Figure (C)).}
  \label{fig:1}
\end{figure}
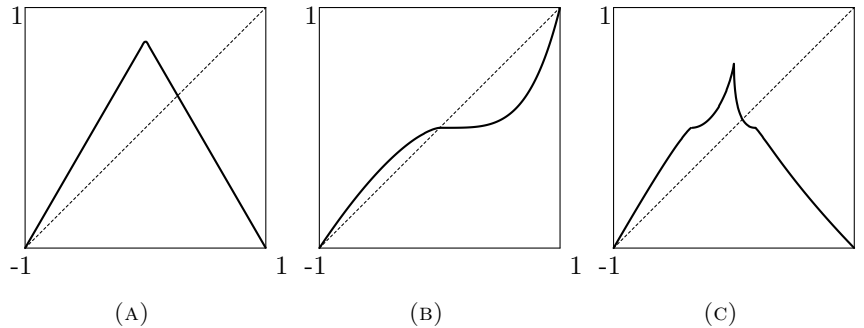

\subsection{Strategy and organization}
We now describe the strategy of the proof of Theorem \ref{thm:1} and the organization of the paper.

In \S 2 we review some general theory and results concerning the kneading sequence for unimodal maps and unimodal maps with Fibonacci recurrence. In particular, in \S 2.1 we will
describe the relationship between the kneading map and the kneading sequence, and in \S 2.2 we define
the Fibonacci recurrence. These two elements will be of importance to describe the
combinatorics of the critical orbit.

In \S 3.1  we make a detailed description of the set $\overline{\cO_{T_{\lambda_F}}(0)}$, and following \cite{LyMi93}, we construct a partition of it that
will allow us to estimate close return times to the turning point and lower bounds for the distances of these close returns. In \S 3.2 we estimate how fast the orbit of the turning
point return to itself in terms of the return time (see Lemma \ref{lemm:diam_estimate}). This estimation is of importance since it gives us an exact estimation of the growth of the
geometry near the turning point for our map $f$.

In \S 4 we describe the unique ergodic invariant measure $\mu_P$ supported on
$\overline{\cO_{T_{\lambda_F}}(0)}$, restricted to the partition constructed in
\S 3.1. We need this estimation to prove part (1) in Theorem \ref{thm:1}.

In section \S 5 we prove the following proposition that will give us a key bound on the derivative of $f$ in terms of $h^{-1}$. Without loss of generality, we will assume that $h$ preserves orientations,

\begin{prop}
  \label{prop:0}
  Let $h$ and $f$ be as in Theorem \ref{thm:1}. Then $f$ has a Lorenz-like singularity at
  $\tilde{c}$. Moreover there exists $\alpha^+> 1$, $\alpha^- >1$, $K > 0$, and $\delta > 0$ such that the following property holds:
  For every  $x~\in~(\tilde{c} , h(\delta)),$  
  \begin{equation}
    \label{eq:thm_1}
    K^{-1}|h^{-1}(x)|^{-\alpha^+} \leq |f'(x)| \leq K |h^{-1}(x)|^{-\alpha^+}, 
  \end{equation}
  and for every $x \in (h(-\delta) , \tilde{c}),$
  \begin{equation}
    \label{eq:thm_2}
    K^{-1}|h^{-1}(x)|^{-\alpha^-} \leq |f'(x)| \leq K |h^{-1}(x)|^{-\alpha^-}. 
  \end{equation}
\end{prop}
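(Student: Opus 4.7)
The proof is a direct chain-rule calculation whose only substantive input is the observation that $T_{\lambda_F}$ sends the critical point $0$ of $h$ to a regular point. From $f = h \circ T_{\lambda_F} \circ h^{-1}$, the identity $(h^{-1})'(x) = 1/h'(h^{-1}(x))$, and the fact that $|T_{\lambda_F}'| \equiv \lambda_F$ off the turning point, one obtains, for every $x \in [-1,1]\setminus\{\tilde{c}\}$ with $h^{-1}(x) \neq 0$,
\[
|f'(x)| \;=\; \lambda_F \cdot \frac{|h'(T_{\lambda_F}(h^{-1}(x)))|}{|h'(h^{-1}(x))|}.
\]
Since $h$ is orientation-preserving, $h^{-1}(x)$ sits on the same side of $0$ as $x$ does of $\tilde{c}$, which aligns the two inequalities of the proposition with the right and left critical orders of $h$ at $0$.

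Next I would bound the numerator. Because $T_{\lambda_F}(0) = \lambda_F - 1 > 0$ is neither the critical point of $h$ nor a boundary point of $[-1,1]$, continuity of $T_{\lambda_F} \circ h^{-1}$ at $\tilde{c}$ yields $\delta_1 > 0$ such that $T_{\lambda_F}(h^{-1}((\tilde{c}-\delta_1, \tilde{c}+\delta_1)))$ stays in a compact set $V \subset [-1,1]\setminus\{0\}$. On $V$, $h$ is of class $C^{\omega}$ with no critical points, so $|h'|$ is bounded above and below by positive constants on $V$, and the numerator contributes just a multiplicative constant in a neighborhood of $\tilde{c}$.

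For the denominator I would apply the non-flat critical point condition to $h$ at $0$: there exist orders $\alpha^+, \alpha^-$ and $\delta_2 > 0$ such that $|h'(y)| \asymp |y|^{\alpha^+}$ on $(0, \delta_2)$ and $|h'(y)| \asymp |y|^{\alpha^-}$ on $(-\delta_2, 0)$. Taking $\delta := \min(\delta_1, \delta_2)$ and substituting $y = h^{-1}(x)$ into the chain-rule formula at once delivers \eqref{eq:thm_1} on $(\tilde{c}, h(\delta))$ and \eqref{eq:thm_2} on $(h(-\delta), \tilde{c})$, with a single constant $K$ absorbing all the multiplicative factors from both Step~2 and Step~3.

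Finally, to derive the Lorenz-like condition, I would integrate $|h'(y)| \asymp |y|^{\alpha^\pm}$ across the critical point to obtain $|x - \tilde{c}| = |h(h^{-1}(x)) - h(0)| \asymp |h^{-1}(x)|^{\alpha^\pm + 1}$, hence $|h^{-1}(x)| \asymp |x-\tilde{c}|^{1/(\alpha^\pm + 1)}$. Substituting into the bound on $|f'(x)|$ yields $|f'(x)| \asymp |x-\tilde{c}|^{-\ell^\pm}$ with $\ell^\pm := \alpha^\pm/(\alpha^\pm + 1) \in (0,1)$, which is exactly the Lorenz-like condition at $\tilde{c}$. The computation is routine throughout; the only points requiring attention are the orientation bookkeeping and the choice of a common $\delta$ small enough for both the regular-point estimate and the non-flatness estimate to hold simultaneously, so I do not anticipate any serious obstacle.
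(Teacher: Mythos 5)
Your proposal is correct and follows essentially the same route as the paper: chain rule for $f = h \circ T_{\lambda_F} \circ h^{-1}$, bounding the numerator $|h'(T_{\lambda_F}(h^{-1}(x)))|$ above and below by constants near $\tilde{c}$ (because $T_{\lambda_F}(0) = \lambda_F - 1$ is a regular point of $h$), using the non-flatness of $h$ at $0$ for the denominator, and finally integrating $|h'(y)| \asymp |y|^{\alpha^\pm}$ to convert $|h^{-1}(x)|$ into $|x - \tilde{c}|^{1/(\alpha^\pm + 1)}$ and identify $\ell^\pm = \alpha^\pm/(\alpha^\pm + 1)$. The only cosmetic difference is that you obtain the numerator bound by continuity of $T_{\lambda_F} \circ h^{-1}$ into a compact subset of $[-1,1]\setminus\{0\}$, whereas the paper phrases the same fact via the specific interval $I_k^1 = [c_{S(k)+1}, c_1]$ not containing $0$.
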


In \S 6 we prove the following proposition that implies items (1), and (3) in Theorem~\ref{thm:1},
\begin{prop}
  \label{prop:1}
  Let $h$ and $f$ be as in Theorem \ref{thm:1}. Then~
  \begin{itemize}
      \item[(i)] $\int \log |f'|^+ d\tilde{\mu}_P = +\infty$,
      \vspace{0.2 cm}
      \item[(ii)] $\int \log |f'|^- d\tilde{\mu}_P= -\infty$, and
      \vspace{0.2cm}
      \item[(iii)] $\int | \log(\dist(\cdot, \cS(f)))| \,  \tilde{\mu}_P = +\infty$.
  \end{itemize}
\end{prop}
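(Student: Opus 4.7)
The plan is to reduce all three items to the single divergence
\[ J \= \int_{\{|y|<\delta\}} \log\tfrac{1}{|y|}\, d\mu_P(y) \;=\; +\infty \]
for some small $\delta>0$, and then to establish $J=+\infty$ itself from the Fibonacci combinatorics of \S 3 and \S 4. Throughout I will use the change-of-variables identity $\int\phi\,d\tilde{\mu}_P = \int\phi\circ h\,d\mu_P$ (immediate from $\tilde{\mu}_P = h_*\mu_P$) and the $T_{\lambda_F}$-invariance of $\mu_P$.

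Granting $J=+\infty$, items (i) and (iii) are immediate. For (i), Proposition \ref{prop:0} gives, for $y$ in a one-sided neighborhood of $0$,
\[ \log|f'(h(y))| \;\geq\; \alpha^{\pm}\log\tfrac{1}{|y|} - \log K, \]
so the positive part $\log|f'\circ h|^+$ is at least $\alpha_{\min}\log(1/|y|) - O(1)$ near $0$; the change of variables together with $J=+\infty$ then give $\int\log|f'|^+\,d\tilde{\mu}_P = +\infty$. For (iii) I use $\dist(x,\cS(f)) \leq |x-\tilde c|$ and the fact that $h$ is non-flat at $0$ of some order $\alpha>1$, which implies $|h(y)-\tilde c| \asymp |y|^\alpha$, hence $-\log\dist(h(y),\cS(f)) \gtrsim -\log|y|$; $J=+\infty$ closes it.

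For (ii), the critical points of $f$ are $\hat c_\pm = h(a_\pm)$ with $\{a_+,a_-\} = T_{\lambda_F}^{-1}(0)$. Using the chain-rule expression $f'(h(y)) = \lambda_F\, h'(T_{\lambda_F}(y))/h'(y)$, the non-flatness of $h$ at $0$, and the fact that $T_{\lambda_F}$ is affine of slope $\pm\lambda_F$ near $a_\pm$, one obtains
\[ \log|f'(h(y))| \;=\; (\alpha^{\pm} - 1)\log|y-a_\pm| + O(1) \quad\text{as } y\to a_\pm, \]
which goes to $-\infty$ logarithmically. To transfer the divergence at $0$ to one near $\{a_+,a_-\}$, I apply $\int g\,d\mu_P = \int g\circ T_{\lambda_F}\,d\mu_P$ with $g(z)=-\log|z|\cdot\mathbf{1}_{\{|z|<r\}}$; the local affine form of $T_{\lambda_F}$ near $a_\pm$ yields, for every $r>0$,
\[ \sum_{\pm}\int_{\{|y-a_\pm|<r/\lambda_F\}} -\log|y-a_\pm|\,d\mu_P \;=\; J_r + O(1) \;=\; +\infty, \]
so at least one of the two summands is infinite. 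Combined with the previous local estimate this gives $\int\log|f'|^-\,d\tilde{\mu}_P = -\infty$.

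The core of the plan, and the main obstacle, is the bound $J=+\infty$. I would establish it via the partition $\{A_k\}_k$ of $\overline{\cO_{T_{\lambda_F}}(0)}\cap(-\delta,\delta)$ constructed in \S 3.1, indexed by the Fibonacci close-return times $S_k$: Lemma \ref{lemm:diam_estimate} gives $\diam(A_k)=:d_k$ with $-\log d_k \asymp S_k$, while the measure computations of \S 4 should furnish $\mu_P(A_k) \gtrsim 1/S_k$. Then
\[ J \;\gtrsim\; \sum_{k} \mu_P(A_k)\,\log(1/d_k) \;\gtrsim\; \sum_{k} \frac{S_k}{S_k} \;=\; +\infty. \]
The delicate point is the exact matching between the geometric decay $-\log d_k \asymp S_k$ of the close returns and the mass $\mu_P(A_k)\gtrsim 1/S_k$ allotted by the unique invariant measure; any weaker measure lower bound would fail to force the logarithmic divergence, which is precisely why the detailed Fibonacci analysis of \S 3 and \S 4 is essential.
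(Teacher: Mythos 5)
Your reduction of (i) and (iii) to the single divergence $J=\int_{\{|y|<\delta\}}\log(1/|y|)\,d\mu_P=+\infty$ is essentially the paper's route: the paper proves (i) by summing $\int_{h(J_n)}\log|f'|\,d\tilde\mu_P$ over the disjoint pieces $J_n\subset I_k$, using $\mu_P(J_n)=\varphi^{-(n+1)}$ (Lemma \ref{lemm:measure_estimate}) against $-\log|D_{n-1}|\asymp S(n)\log\lambda$ (Lemma \ref{lemm:diam_estimate}); your ``$A_k$'' play the role of the $J_n$ (there is no partition literally called $\{A_k\}$ in \cref{sec:post-critical}, but the identification is correct and the $S_k\cdot\varphi^{-k}\asymp 1$ matching you highlight is exactly the crux). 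For (iii) the paper closes the loop via the Lorenz-like bound $\log|f'(x)|\le\log L-\ell\log\dist(x,\cS(f))$ together with the estimate from (i), whereas you go through the non-flatness of $h$ at $0$ directly; both give the same conclusion (note, though, that integrating $h'(y)\asymp|y|^{\alpha}$ gives $|h(y)-\tilde c|\asymp|y|^{\alpha+1}$, not $|y|^{\alpha}$ — harmless here).

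Where you genuinely diverge from the paper is (ii). The paper argues by contradiction: assuming the negative part is integrable, it uses the decomposition $\log|f'|=\log\lambda+\log|h'\circ T\circ h^{-1}|-\log|h'\circ h^{-1}|$ and the $f$-invariance of $\tilde\mu_P$ to force the two $h'$-terms to cancel, yielding $\int\log|f'|\,d\tilde\mu_P=\log\lambda$, which contradicts (i). You instead use $T$-invariance of $\mu_P$ with the cutoff $g(z)=-\log|z|\,\mathbf 1_{\{|z|<r\}}$ to transfer $J=+\infty$ to at least one of the preimages $a_\pm=T^{-1}(0)$, and then read off $\log|f'(h(y))|=\alpha^{\pm}\log|y-a_\pm|+O(1)$ near $a_\pm$ (your exponent $(\alpha^{\pm}-1)$ is a slip, since $h'$ is bounded and bounded away from zero near $a_\pm$, so only the $h'(T(y))\asymp|T(y)|^{\alpha^{\pm}}$ factor contributes; again this does not affect the conclusion). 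Your argument is more local and constructive — it pinpoints that the divergence of the negative part actually occurs at the non-flat critical points $h(a_\pm)$ — while the paper's contradiction is slicker and avoids any analysis near $a_\pm$ but gives no localization. Both are valid; yours would be slightly longer in detail (you must handle the sign change of $T(y)$ across $a_\pm$, so the exponent is $\alpha^+$ on one side and $\alpha^-$ on the other, and you need $\mu_P(\{a_\pm\})=0$, which follows from minimality on a Cantor set) but it does not require the (slightly delicate) integrability bookkeeping for $\log|h'\circ h^{-1}|$ that the paper relies on.
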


To prove the first part of Proposition \ref{prop:1}, we use the fact that around the Lorenz-like singularity, the geometry of $f$ grows at the same rate as the measure decreases.
This implies that in a sequence of disjoint intervals that converges to the critical point, the integral of $\log |f'|$ is bounded from below by a positive constant. For the second
part, we use the fact that the two preimages of the turning point of $f$ are critical points and both belong to the set $\overline{\cO_f(\tilde{c})}$. The third part of the proposition is a consequence of the 
estimation that we get in the proof of the first part.

In section \S 7 we prove the following proposition, that along with the fact that $f$ is transitive on 
$\overline{\cO_f(\tilde{c})}$, will imply item (2) in Theorem \ref{thm:1}. Recall that 
for $x \in \overline{\cO_f(\tilde{c})}$ such that $\tilde{c} \in \cO_f(x)$ we have that the pointwise Lyapunov exponent is not
defined, since for $n$ large enough $\log |(f^n)'(x)|$ is not defined.

\begin{prop}
  \label{prop:2}
  Let $h$ and $f$ be as in Theorem \ref{thm:1}. Then for every $x \in \overline{\cO_f(\tilde{c})}$
  with $\tilde{c} \notin \cO_f(x)$, we have that 
  \begin{equation}
     \label{eq:prop_2}
     \liminf_{n \to \infty} \frac{1}{n}\log|(f^n)'(x)| \leq \left( 1 - \frac{\alpha}{\varphi} \right)
     \log \lambda < \log \lambda  \leq \limsup_{n\to \infty} \frac{1}{n}\log|(f^n)'(x)|.
    \end{equation}
\end{prop}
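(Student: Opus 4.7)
The plan is to pull back via the conjugacy $f = h \circ T_{\lambda_F} \circ h^{-1}$. Write $\lambda = \lambda_F$ and $y = h^{-1}(x) \in \overline{\cO_{T_\lambda}(0)}$. The hypothesis $\tilde c \notin \cO_f(x)$ is equivalent to $0 \notin \cO_{T_\lambda}(y)$ (in particular $y \neq 0$), so $h'$ is well-defined and nonzero along the entire forward orbit of $y$. Since $f^n = h \circ T_\lambda^n \circ h^{-1}$ and $|T_\lambda'| \equiv \lambda$, the chain rule gives
\begin{equation*}
\frac{1}{n}\log|(f^n)'(x)| \;=\; \log\lambda \;+\; \frac{1}{n}\log|h'(T_\lambda^n(y))| \;-\; \frac{1}{n}\log|h'(y)|,
\end{equation*}
and the last term is $o(1)$. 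Everything therefore reduces to locating the asymptotic behaviour of $\frac{1}{n}\log|h'(T_\lambda^n(y))|$. By non-flatness of $h$ at $0$, with $\alpha$ the order matching Proposition \ref{prop:0}, we have $\log|h'(z)| = \alpha\log|z| + O(1)$ near $0$.

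For the $\limsup$ direction, the global bound $|h'|\leq \sup_{[-1,1]}|h'| < \infty$ immediately yields $\limsup \frac{1}{n}\log|h'(T_\lambda^n(y))| \leq 0$, hence $\limsup \frac{1}{n}\log|(f^n)'(x)| \leq \log\lambda$. For the matching lower bound, I would show that $T_\lambda^n(y) \not\to 0$: continuity of $T_\lambda$ at $0$ would otherwise force $T_\lambda^{n+1}(y) \to T_\lambda(0) = \lambda - 1 \neq 0$, contradicting $T_\lambda^{n+1}(y) \to 0$. Hence the forward orbit closure of $y$ contains a point $y_1 \neq 0$, and by topological recurrence in $\overline{\cO_{T_\lambda}(0)}$, there are infinitely many $n$ with $T_\lambda^n(y)$ in a fixed neighbourhood of $y_1$ bounded away from $0$. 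On this subsequence $|h'(T_\lambda^n(y))|$ is bounded below by a positive constant, so $\frac{1}{n}\log|h'(T_\lambda^n(y))| \to 0$, which yields $\limsup \frac{1}{n}\log|(f^n)'(x)| = \log\lambda$.

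For the $\liminf$ direction, the key input is the quantitative Fibonacci close return delivered by Lemma \ref{lemm:diam_estimate}, namely $|T_\lambda^{F_k}(0)| \asymp \lambda^{-F_{k-1}}$ with $F_k$ the $k$-th Fibonacci number. If $y = T_\lambda^j(0) \in \cO_{T_\lambda}(0)$, take $n_k = F_k - j$; then $T_\lambda^{n_k}(y) = T_\lambda^{F_k}(0)$ and
\begin{equation*}
\frac{1}{n_k}\log|h'(T_\lambda^{n_k}(y))| \;\leq\; -\alpha\,\frac{F_{k-1}}{F_k-j}\log\lambda + O\bigl(F_k^{-1}\bigr) \;\xrightarrow[k\to\infty]{}\; -\frac{\alpha}{\varphi}\log\lambda,
\end{equation*}
using $F_{k-1}/F_k \to 1/\varphi$. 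For $y \in \overline{\cO_{T_\lambda}(0)}\setminus \cO_{T_\lambda}(0)$, the same asymptotic rate is extracted from the partition of $\overline{\cO_{T_\lambda}(0)}$ constructed in \S 3: the level-$k$ piece around $0$ has diameter $\asymp \lambda^{-F_{k-1}}$ and every orbit in the Cantor set enters it within a time comparable to $F_k$. Feeding this back into the chain-rule identity gives
\begin{equation*}
\liminf_{n\to\infty}\frac{1}{n}\log|(f^n)'(x)| \;\leq\; \log\lambda - \frac{\alpha}{\varphi}\log\lambda \;=\; \Bigl(1-\frac{\alpha}{\varphi}\Bigr)\log\lambda,
\end{equation*}
and the strict inequality $(1-\alpha/\varphi)\log\lambda < \log\lambda$ is immediate from $\alpha>0$ and $\log\lambda>0$.

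The main obstacle is the passage from the Fibonacci close return of the critical orbit to a close return for an arbitrary $y \in \overline{\cO_{T_\lambda}(0)}$ while preserving the sharp exponent $1/\varphi$. One needs to control \emph{simultaneously} the depth $|T_\lambda^{n_k}(y)| \lesssim \lambda^{-F_{k-1}}$ \emph{and} the return time $n_k = F_k(1+o(1))$, uniformly over $y$; mere existence of close returns would give a weaker rate. This is precisely what the nested Fibonacci partition of \S 3 together with Lemma \ref{lemm:diam_estimate} are built to supply.
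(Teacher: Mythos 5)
Your high-level plan---reduce to the Fibonacci tent map via the conjugacy, use boundedness of $h'$ away from the singularity for the $\limsup$ bound, and use close returns of the orbit to $0$ together with non-flatness of $h$ for the $\liminf$ bound---is exactly the paper's. Your argument for $\limsup \geq \log\lambda$ (showing $T^n(y) \not\to 0$ by continuity, then taking a subsequence bounded away from $0$) is a correct alternative to the paper's, which instead reads the bounded-away subsequence off the same return-time construction used for the $\liminf$. The additional bound $\limsup \leq \log\lambda$ you prove is correct but not claimed in the proposition.

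The genuine gap is in the $\liminf$ for a general $y \in \overline{\cO_T(c)}$: you correctly name the obstacle (simultaneous control of return depth and return time, uniformly in $y$), but you do not supply the argument, and this is the substantive content of the paper's proof. It is Lemma~\ref{lemm:time_bound}, proved by induction on the nested partition $M_k$ of \S3: for any $\hat{x} \in I_k \cap \overline{\cO_T(c)}$, the first entry time $n_i$ into $I_{k+i}$ satisfies $S(k+i)-S(k) \leq n_i \leq S(k+i+2)-S(k+2)$. The \emph{upper} bound $n_i \lesssim S(k+i+2)$, combined with $|D_{k+i}| \asymp \lambda^{-S(k+i+1)}$ from Lemma~\ref{lemm:diam_estimate}, is what produces the coefficient $S(k+i+1)/S(k+i+2) \to 1/\varphi$. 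Your sketch tacitly assumes the return time to depth $m$ is $S(m)(1+o(1))$; it is not --- for a worst-case orbit it can be as large as $\asymp S(m+2)$, a bounded but non-negligible multiplicative factor $\varphi^2$. Without the two-sided return-time lemma, the stated coefficient $1/\varphi$ is not justified for arbitrary $y$.

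There is also an index error in your special-case computation for $y \in \cO_T(c)$. Lemma~\ref{lemm:diam_estimate} gives $|T^{S(k)}(0)| = |D_k| \asymp \lambda^{-S(k+1)}$, not $\lambda^{-S(k-1)}$ as you wrote. Dividing $-S(k+1)$ by the return time $n_k \approx S(k)$ gives $-S(k+1)/S(k) \to -\varphi$, not $-1/\varphi$, so a point on the critical orbit in fact satisfies the \emph{stronger} bound $\liminf \leq (1-\alpha\varphi)\log\lambda$. The coefficient $1/\varphi$ in the proposition is the uniform bound for all of $\overline{\cO_f(\tilde{c})}$, coming from the slowest allowable returns, not from the critical orbit itself.
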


To prove Proposition \ref{prop:2}, we use the fact that $f$ restricted to the set $\overline{\cO_f(\tilde{c})}$
is minimal, so the orbit of every point accumulates points far from the turning
point. In that case, the derivative is bounded so the limit of that subsequence must be the same as the one in $T_{\lambda_F}$.
On the other hand, if we look at a subsequence that accumulates at the Lorenz-like singularity, the growth of the derivative is exponential with respect to the return time, so the limit
of this subsequence will be bounded away from zero.

In section \S 8 we will prove the following proposition that implies item (4) in Theorem \ref{thm:1}. 
Since $f$ is topologically conjugated to the Fibonacci tent map we know that $h(0)$ is 
recurrent, and that the recurrence times are given by the Fibonacci numbers. Then, to have an estimate on the recurrence 
of the turning point it is enough to estimate the decay of the distances 
$|f^{S(k)}(\tilde{c}) - \tilde{c}|$, where 
\[ S(0) = 1, \, S(1) = 2,\, S(2) = 3,\, S(3) = 5, \ldots,  \] are the Fibonacci numbers.
\begin{prop}
    \label{prop:3}
    There exist $\Theta$, $\alpha'$, $\alpha''$ positive numbers, such that 
    \begin{equation}
        \lambda^{-S(k)\alpha''}\Theta^{-1} \leq |f^{S(k)}(\tilde{c}) - \tilde{c}| \leq
        \lambda^{-S(k)\alpha'} \Theta,
\end{equation}
for every $k \geq 1$.
\end{prop}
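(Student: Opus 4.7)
The strategy is to push the Fibonacci-time diameter estimate for $T_{\lambda_F}$ established in \S 3.2 through the topological conjugacy $f = h \circ T_{\lambda_F} \circ h^{-1}$. Iterating the conjugacy and using $\tilde c = h(0)$ gives
\[ f^{S(k)}(\tilde c) - \tilde c \;=\; h\bigl(T_{\lambda_F}^{S(k)}(0)\bigr) - h(0), \]
so the problem decouples into two ingredients: (a) controlling $\bigl|T_{\lambda_F}^{S(k)}(0)\bigr|$, and (b) transporting that displacement through the homeomorphism $h$.

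For (a), I would invoke Lemma~\ref{lemm:diam_estimate} (together with the partition of $\overline{\cO_{T_{\lambda_F}}(0)}$ built in \S 3.1), which supplies positive constants $\beta_1,\beta_2$ and $\Theta_0 \ge 1$ with
\[ \Theta_0^{-1}\,\lambda^{-\beta_2 S(k)} \;\le\; \bigl|T_{\lambda_F}^{S(k)}(0)\bigr| \;\le\; \Theta_0\,\lambda^{-\beta_1 S(k)} \qquad (k\ge 1). \]
For (b), the non-flatness of the critical point of $h$ at $0$, together with $h(0)=\tilde c$, yields after integrating \eqref{eq:right_crit_point}--\eqref{eq:left_crit_point} constants $K_0 \ge 1$ and $\delta_0>0$ such that for $0<y<\delta_0$ one has $K_0^{-1}|y|^{\alpha^+} \le |h(y)-h(0)| \le K_0 |y|^{\alpha^+}$, and the symmetric bound with $\alpha^-$ for $-\delta_0 < y < 0$. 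Since $\bigl|T_{\lambda_F}^{S(k)}(0)\bigr|\to 0$, for $k$ large enough the tent-map iterate lies in $(-\delta_0,\delta_0)$, and regardless of which side it occupies, composing the two estimates gives
\[ \Theta^{-1}\,\lambda^{-\alpha'' S(k)} \;\le\; \bigl|f^{S(k)}(\tilde c) - \tilde c\bigr| \;\le\; \Theta\,\lambda^{-\alpha' S(k)}, \]
with $\alpha' := \beta_1 \min(\alpha^+,\alpha^-)$ and $\alpha'' := \beta_2 \max(\alpha^+,\alpha^-)$, after absorbing $\Theta_0, K_0$ and the finitely many initial values of $k$ into a single constant $\Theta$.

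\textbf{Main obstacle.} All the dynamical content is already packaged in Lemma~\ref{lemm:diam_estimate}; Proposition~\ref{prop:3} is then essentially a change-of-variables through $h$. The one subtle point is that the sign of $T_{\lambda_F}^{S(k)}(0)$ alternates with $k$ according to the Fibonacci kneading, so the asymmetry $\alpha^+ \neq \alpha^-$ of the critical point of $h$ forces the use of $\min$ on one side and $\max$ on the other, which is precisely the reason two distinct exponents $\alpha' < \alpha''$ appear in the statement rather than a single one.
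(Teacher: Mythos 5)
Your plan is correct, and it is a genuinely different and in fact cleaner route than the one in the paper. The paper defines annular sets $A_k^\pm = D_k^\pm \setminus \overline{D_{k+1}^\pm}$, applies the mean value theorem to $h$ on each annulus (where $|h'|$ is controllable because the annulus avoids $0$), and then telescopes the sum $\sum_m |h(A_{k+m}^\pm)| = |h(D_k^\pm)| - |h(D_{k+n+1}^\pm)|$ to recover a two-sided bound on $|h(D_k^\pm)| = |f^{S(k)}(\tilde c)-\tilde c|$; the author remarks in the strategy section that this detour is taken precisely because a naive MVT on the full interval $[0,c_{S(k)}]$ gives no lower bound, since $h'$ vanishes at the endpoint $0$. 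Your proposal sidesteps this by integrating the non-flatness bound on $h'$ across the interval $[0,c_{S(k)}]$, which handles the degeneracy at $0$ automatically and yields the two-sided power-law bound on $|h(y)-h(0)|$ in one step. Notably, the paper's own proof of Proposition~\ref{prop:0} already derives exactly this integrated estimate (the displayed bounds $M_1^{-1}|x|^{\alpha^++1} \leq |h(x)-\tilde c| \leq M_1|x|^{\alpha^++1}$), so your ingredient (b) could even be cited from there rather than rederived; combined with Lemma~\ref{lemm:diam_estimate} and the observation $S(k) \leq S(k+1) \leq 2S(k)$, the proposition follows exactly as you outline.

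One small slip in the write-up: integrating $e^{\pm M}|t|^{\alpha^+}$ produces $|y|^{\alpha^+ + 1}/(\alpha^+ + 1)$, so the bounds for (b) should read $K_0^{-1}|y|^{\alpha^++1} \leq |h(y)-h(0)| \leq K_0|y|^{\alpha^++1}$ (and symmetrically with $\alpha^-+1$), and consequently $\alpha' := \beta_1\min(\alpha^++1,\alpha^-+1)$ and $\alpha'' := \beta_2\max(\alpha^++1,\alpha^-+1)$. This does not affect the structure of the argument, only the final bookkeeping of the exponents, and the $+1$ is consistent with the Lorenz-like order $\ell^\pm = \alpha^\pm/(\alpha^\pm+1) \in (0,1)$ appearing in Proposition~\ref{prop:0}.
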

To prove Proposition \ref{prop:3} we estimate the diameter of certain symmetric intervals, whose closure are disjoint
from the Lorenz-like singularity, and whose lengths approximate the
left and right distance of the closest returns to the Lorenz-like singularity. To do this we use the mean 
value theorem, the fact that $h$ has a non-flat critical point at $0$, and Lemma \ref{lemm:diam_estimate} that give us
an estimate on the diameter of the preimage of these intervals. The reason to use these intervals is because when we try
to make a direct estimation the distance $|f^{S(k)}(\tilde{c}) - \tilde{c}|$ we do not have control on
how close to zero is the derivative of $h$.

\subsection{Acknowledgements}
The author would like to thanks Juan Riverla-Letelier for helpful discussions and encouragement for this work. The author is also grateful to Daniel Coronel and Yuri Lima for helpful comments. 

\section{Preliminaries}

Throughout the rest of this work, we will denote by $I$ the closed interval $[-1,1] \subset \R$. We use $\N$ to denote the set of integers that are greater than or equal to
$1$ and put $\N_0 \= \N \cup \{ 0 \}$.

We endow $I$ with the distance induced by the absolute value $|\cdot|$ on $\R$. For $x \in \R$ and $r > 0$, we denote by $B(x,r)$ the open ball of $I$
with center at $x$ and radius $r$. For an interval $J \subset I$, we denote by $|J|$ its length.

For real numbers $a,b$
we put $[a,b] \= [\min\{a,b\}, \max\{a,b\}]$ in the same way $(a,b) \= (\min\{a,b\} , \max\{a,b\})$.

\subsection{The kneading sequence} Following \cite{MevS93}, we will introduce  the kneading invariant of a unimodal map and related properties.
Let $f \colon I \to I$ be a unimodal with turning point $c \in (0,1)$.
We will  use the notation $c_i \= f^i(c)$ for $i \geq 1$. 
Suppose $f(-1) = f(1) = -1$  and $c_2 < c < c_1.$ Let $\Sigma \= \{ 0,1,c \}^{\N_0}$ be the space of
sequences $\ulx =(x_0, x_1, x_2, \ldots)$. In $\Sigma$ we consider the topology generated by the cylinders \[  [a_0a_1\ldots a_{n-1}]_k \= \{ \ulx \in \Sigma \colon x_{k+i} = a_i
  \text{ for all } i = 0,1, \ldots , n-1\}.  \] With this topology, $\Sigma $ is a compact space.
Let us define 
\begin{align*}
  \uli \colon I & \longrightarrow \Sigma\\
  x & \longmapsto   (i_0(x),i_1(x), \ldots)
\end{align*}
where 
\[ i_n(x) =
  \begin{cases}
    0 & \quad \text{if } f^n(x) \in [-1,c) \\
    1 & \quad \text{if } f^n(x) \in (c,1] \\
    c & \quad \text{if } f^n(x) = c.
  \end{cases}
\]
The sequence $\uli (x)$ is called \emph{the itinerary of $x$ under $f$}. 
Given $n \in \N$ and $x \in I$ there exists $\delta >0$ such that $i_n(y) \in \{0,1\} $ and is constant for every $y \in (x, x+ \delta)$. Observe that this
value is not the same as $i_n(x)$ if $x$ is the turning point. It follows that
\[ \uli(x^+) \= \lim_{y \downarrow x } \uli(y) \hspace{20pt} \text{ and } \hspace{20pt} \uli(x^-) \= \lim_{y \uparrow x } \uli(y) \]
always exist.
Notice that $\uli(x^-)$ and $\uli(x^+)$ belong to $\{ 0,1 \}^{\N_0}$. The sequence $e_1,e_2, e_3, \ldots$ defined by $e_j\= i_j(c_0^+)$ is called the \emph{kneading invariant} of $f$.
A sequence $\ula \in \{0,1\}^{\N}$ is \emph{admisible} if there exists a unimodal map $f \colon I \to I$ with kneading invariant $\ula$.

We say that $Q \colon \N \to \N_0$ defines a \emph{kneading map} if $Q(k) < k$ for all $k \in \N$ and \[ (Q(j))_{k < j < \infty} \geq  (Q(Q(Q(k)) + j - k))_{k<j<\infty}  \] for all
$k$ with $Q(k)> 0$ ($\geq$ is the lexicographical order). A kneading map leads to an admisible kneading sequence in the following way: defined the sequence $S \colon \N_0 \to \N$ by
$ S(0) = 1$ and $S(k) = S(k-1) + S(Q(k))$ for $k \geq 1$. The kneading sequence $\{e_j\}_{j\geq 1}$ associated to $Q$ is given by $e_1 = 1$ and the relation
\begin{equation}
  \label{eq:cut_time}
  e_{S(k-1)+1}e_{S(k-1)+2} \ldots e_{S(k)-1}e_{S(k)} = e_1e_2 \ldots e_{S(Q(k))-1}(1 - e_{S(Q(k))}),
\end{equation}
for $k \geq 1$. The length of each string in (\ref{eq:cut_time}) is $S(Q(k))$, thus at the $kth$-step of the process we can construct $S(Q(k))$ symbols of the sequence. Since for
every $k \geq 1$, we have $Q(k) < k$, we get that $Q(1) = 0$. So, for $k = 1$, each string in (\ref{eq:cut_time}) has $1$ symbol. Then
\[ e_2 = e_{S(0)+1} = 1 - e_{S(0)}= 0.\] Hence, \[ c_2 < c < c_1.  \] 

\subsection{The Fibonacci tent map.}\label{FTM} 
We will say that a unimodal map $f$ has \emph{Fibonacci recurrence} or it is a 
\emph{Fibonacci unimodal map}
if the kneading map
associated to it is given by $Q(1) = 0$ and $Q(k) = k-2$ for $k > 1$. So the sequence $\{ S(n)\}_{n \geq 0}$ is given by the Fibonacci numbers
\[ S(0) = 1, \, S(1) = 2,\, S(2) = 3,\, S(3) = 5, \ldots.  \]  For a Fibonacci unimodal map $f$ we have that

\begin{equation}
  \label{def:fibonacci}
  |c_{S(0)} - c| > |c_{S(1)} - c|  > \ldots |c_{S(n)} - c| > |c_{S(n+1)} - c|> \ldots,  
\end{equation}
and
\begin{equation}
  \label{def:fibonacci_2}
  |c_3 - c| < |c_4 - c|.
\end{equation}
See \cite[Lemma 2.1]{LyMi93} and references therein.
The set $\overline{\cO_f(c)}$ is a Cantor set and the restriction of $f$ to this set is minimal and uniquely ergodic, see \cite[Proposition~1]{Br03} or \cite[Proposition~4]{CRL10} 
and references therein. 
The kneading invariant for a Fibonacci unimodal map starts like \[ 100111011001010011100 \ldots  \]
Let us consider the tent family $T_S\colon I \to I$ defined by $T_s(x) = s(1 - | x |) -1$ for every $x \in I$ and every $s \in (0,2]$. This family is \emph{full}, thus for every
kneading map $Q$ there is a parameter $s \in (0,2]$ so that the kneading map of $T_s$ is $Q$, see \cite{MiTh88}, \cite[Chapter 2]{MevS93}. So there exists $\lambda_F \in (0,2]$ such
that the kneading map associated to $T_{\lambda_F}$ is given by $Q(k) = \max\{0, k-2\}$. 

From now on we use the notations $T \= T_{\lambda_F}$, $\lambda \= \lambda_F$, $c \= 0$, and
$c_i \= T^i(c)$.

\section{The set $\overline{\cO_T(c)}$}
\label{sec:post-critical}
\subsection{The combinatorics of the set $\overline{\cO_T(c)}$}
In this section we will give an explicit description of the set $\overline{\cO_{T(c)}}$  following \cite{LyMi93}.

Put  $S(-2) = 0$ and $S(-1) = 1$.
From (\ref{eq:cut_time}) we obtain that for every $k \geq 0$ the points $c_{S(k)}$ and $c_{S(k+2)}$ are on opposite sides of $c$. Since \[ c_{S(1)} = c_2 < c < c_1 =c_{S(0)}, \]
we conclude that for $k \equiv 0 \pmod 4,~c_{S(k)}$ is to the right of $c$ and if $k \equiv 2 \pmod4 $,~$c_{S(k)}$ is to the left of $c$. Since we also know that $c_{S(1)}$ is to
the left of $c$, we can conclude that for $k \equiv 1 \pmod 4$, $c_{S(k)}$ is to the left of $c$, and for $k \equiv 3 \pmod 4$, $c_{S(k)}$ is to the right of $c$. From this, we can
conclude that if $k$ is even the points $c_{S(k)}$ and $c_{S(k+1)}$ are in opposite sides of $c$, and therefore \[ [c_{S(k+1)}, c_{S(k)}]  \supseteq [c_{S(k+2)} , c_{S(k)}].  \] In the
case that $k$ is odd, $c_{S(k)}$ and $c_{S(k+1)}$ are on the same side with respect to $c$, and therefore \[ [c_{S(k+1)} , c_{S(k)}] \subseteq [c_{S(k+2)} , c_{S(k)}].  \]

For each $k \geq 0$ let $I_k$ be the smallest closed interval containing all of the points $c_{S(l)}$ for every $l \geq k$. For each $n \geq 0$ define $I^n_k \= T^n(I_k)$. By the
above discussion  
\begin{equation}
  \label{eq:I_K}
  I_k =
  \begin{cases}
    [c_{S(k)} , c_{S(k+1)}] \text{ if  $k$ is even, } \\
    [c_{S(k)}, c_{S(k+2)}]  \text{ if $k$ is odd. }
  \end{cases}
\end{equation}

\begin{lemm}
  \label{lemm:T_injective}
  For every $k \geq 1$, we have that $T^j$ is injective on $[c_1,c_{S(k)+1}]$. In particular $I_k^{j+1} = [c_{j+1},c_{S(k) + 1 + j}]$ for every $j \in \{1, \ldots, S(k-1)-1 \}.$ 
\end{lemm}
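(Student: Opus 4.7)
The plan is to establish injectivity and the image formula simultaneously by induction on $j$, using the kneading recurrence \eqref{eq:cut_time} to guarantee that $c=0$ never enters the interior of the iterates of $[c_1, c_{S(k)+1}]$. First I would identify $I_k^1$ with $[c_1, c_{S(k)+1}]$: the parity discussion preceding the lemma shows $c \in \interior(I_k)$ for every $k \geq 1$ (for $k$ even, $c_{S(k)}$ and $c_{S(k+1)}$ lie on opposite sides of $c$; for $k$ odd, the same holds for $c_{S(k)}$ and $c_{S(k+2)}$, whose residues modulo $4$ differ). Since $T(x) = c_1 - \lambda|x|$ and \eqref{def:fibonacci} makes $|c_{S(k)}|$ the largest of $\{|c_{S(\ell)}| : \ell \geq k\}$, one obtains $T(I_k) = [c_1 - \lambda|c_{S(k)}|, c_1] = [c_{S(k)+1}, c_1]$.

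The core observation for the induction is that $T^j$ is injective on $[c_1, c_{S(k)+1}]$ iff $c \notin \interior\bigl(T^i([c_1, c_{S(k)+1}])\bigr)$ for each $i = 0, \ldots, j-1$. I would then run an induction whose hypothesis is $T^i([c_1, c_{S(k)+1}]) = [c_{i+1}, c_{S(k)+1+i}]$; the step $i \mapsto i+1$ reduces to checking that $c_{i+1}$ and $c_{S(k)+i+1}$ lie on the same side of $c$, that is, $e_{i+1} = e_{S(k)+i+1}$.

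This last equality is exactly what the kneading recurrence delivers. Applying \eqref{eq:cut_time} at index $k+1$ with $Q(k+1) = k-1$ gives
\[
e_{S(k)+1} e_{S(k)+2} \cdots e_{S(k+1)} = e_1 e_2 \cdots e_{S(k-1)-1} (1 - e_{S(k-1)}),
\]
so $e_{S(k)+m} = e_m$ for $m = 1, \ldots, S(k-1)-1$. Taking $m = i+1$ for $i = 0, \ldots, S(k-1)-2$ supplies the coincidence needed at each inductive step, so the induction runs exactly up to $j = S(k-1)-1$, and combining with $I_k^1 = [c_1, c_{S(k)+1}]$ yields $I_k^{j+1} = T^j(I_k^1) = [c_{j+1}, c_{S(k)+1+j}]$, as desired.

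The only non-routine point I see is the preparatory step: verifying $c \in \interior(I_k)$ for odd $k$ (by comparing the residues of $k$ and $k+2$ modulo $4$) and that $|c_{S(k)}|$ dominates the absolute values at the endpoints of $I_k$. With that in hand, the rest of the argument is essentially a single application of the kneading recurrence together with the closest-return property \eqref{def:fibonacci}.
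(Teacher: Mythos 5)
Your proposal is correct and follows essentially the same route as the paper: compute $I_k^1=[c_1,c_{S(k)+1}]$ from the tent structure and the closest-return ordering, then use the kneading recurrence \eqref{eq:cut_time} at index $k+1$ to see that $c_j$ and $c_{S(k)+j}$ lie on the same side of $c$ for $1\le j\le S(k-1)-1$, which keeps $c$ out of the images and gives both injectivity and the image formula. The only difference is stylistic: you spell out the induction on $j$ explicitly, whereas the paper compresses the same inductive content into a single sentence.
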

\begin{proof}
Since $|c - c_{S(k)}| > |c - c_{S(m)}|$  and $|T([c_{S(k)},c])| = \lambda|c - c_{S(k)}|$, for every $0 \leq k < m$ we get that $c_{S(k) +1} < c_{S(m) + 1} < c_1$, in particular
$I^1_k = [c_1, c_{S(k) +1}]$. In the case $k \geq 1$, by
(\ref{eq:cut_time}), with $k$ replaced by $k+1$, for every $j \in \{ 1, \ldots , S(k-1)-1 \}$ we have that $c_{S(k)+j}$ and $c_j$ are in the same side respect to $c$. Thus
$c \notin [c_{S(k) + j}, c_j] = T^{j-1}[c_{S(k) +1} , c_1]$, and then the map $T^j$ is injective on $[c_1, c_{S(k) +1}]$. In particular, for $1 < j \leq S(k-1)$

\begin{eqnarray}
  \label{eq:interval_image_general}
  I^{j}_k & = & T^{j-1}([c_1, c_{S(k)+1}])  \\
        & = & [c_j, c_{S(k) + j}] \nonumber 
\end{eqnarray}
\end{proof}
Note that for $k \geq 1$, by Lemma \ref{lemm:T_injective}, with $j = S(k-1) -1$
\begin{equation}
  \label{eq:interval_image}
  I_k^{S(k-1)} = [c_{S(k-1)}, c_{S(k) + S(k-1)}] = [c_{S(k-1)}, c_{S(k+1)}].
\end{equation}
Then, by (\ref{eq:cut_time}), $c \in I^{S(k-1)}_k$ and $c \notin I^n_k  $ for every $0 < n < S(k-1)$.
\begin{lemm} 
  \label{lemm:orbit_order}
  For all $k  \geq 0$ we have that \[ |c_i - c| > |c_{S(k-1)} - c|,\] for all $ 0 < i < S(k)$, with $i \neq S(k-1).$ 
\end{lemm}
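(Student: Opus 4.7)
My plan is to prove Lemma \ref{lemm:orbit_order} by strong induction on $k$. The base cases $k \in \{0,1\}$ are vacuous, and the case $k = 2$ reduces to $|c_1 - c| > |c_2 - c|$, which is the first instance of (\ref{def:fibonacci}). For the inductive step, fix $k \geq 3$, assume the conclusion at all smaller levels, and consider $0 < i < S(k)$ with $i \neq S(k-1)$.

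First suppose $i < S(k-1)$. The inductive hypothesis at level $k-1$ gives $|c_i - c| \geq |c_{S(k-2)} - c|$ (with equality iff $i = S(k-2)$), and (\ref{def:fibonacci}) upgrades this to $|c_i - c| > |c_{S(k-1)} - c|$. Now suppose $S(k-1) < i < S(k)$, and write $i = S(k-1) + j$ with $1 \leq j \leq S(k-2) - 1$. I would apply Lemma \ref{lemm:T_injective} at level $k-1$ to conclude that $T^j$ is injective on $[c, c_{S(k-1)}]$ and that $c_j$ and $c_{S(k-1)+j}$ lie on the same side of $c$. Combined with the fact that the tent map has constant absolute slope $\lambda$ on any interval of monotonicity, this yields the key identity
\[
|c_{S(k-1)+j} - c_j| = \lambda^j \, |c_{S(k-1)} - c|.
\]
Two geometric sub-cases now arise. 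If $c_{S(k-1)+j}$ lies farther from $c$ than $c_j$, then $|c_{S(k-1)+j} - c| \geq |c_j - c|$, and the inductive hypothesis at level $k-2$ (which applies since $0 < j < S(k-2)$) together with (\ref{def:fibonacci}) gives $|c_j - c| \geq |c_{S(k-3)} - c| > |c_{S(k-1)} - c|$, closing this sub-case.

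The remaining sub-case, where $c_{S(k-1)+j}$ lies strictly between $c$ and $c_j$, is the main obstacle. Here $|c_{S(k-1)+j} - c| = |c_j - c| - \lambda^j |c_{S(k-1)} - c|$, and the conclusion demands the strictly quantitative inequality $|c_j - c| > (1 + \lambda^j)|c_{S(k-1)} - c|$, which the bare qualitative induction does not provide. To handle this I would strengthen the inductive statement to transport a geometric ratio bound along the Fibonacci recursion $S(k) = S(k-1) + S(k-2)$, using (\ref{def:fibonacci_2})---equivalently $|c_1 - c| > (\lambda + 1)|c_3 - c|$---as the quantitative seed, and iterating it by applying the distance identity above with $j = S(k-2)-1$ (so that $S(k-1) + j = S(k) - 1$) to bootstrap a control on $|c_{S(k-3)} - c|/|c_{S(k-1)} - c|$. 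A complementary route I would pursue in parallel is to exploit that $c_{S(k-1)+j}$ is an endpoint of the interval $I_{k-1}^j = [c_j, c_{S(k-1)+j}]$, which is disjoint from $c$ for $0 < j < S(k-2)$; a lower bound on $\dist(c, I_{k-1}^j)$ obtained by a case analysis on which endpoint is nearer to $c$, together with (\ref{def:fibonacci}) applied to that nearer endpoint, would also furnish the missing quantitative step.
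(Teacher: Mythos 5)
The gap you flag in your own write-up is real, and neither of your two patches closes it. The root of the trouble is your choice of decomposition: you transport the interval $D_{k-1} = [c, c_{S(k-1)}]$ forward by $T^j$ to get $[c_j, c_{S(k-1)+j}]$, so that the point $c_{S(k-1)+j}$ you are trying to bound is an \emph{endpoint} of the transported interval. When that endpoint happens to be the one nearer to $c$, the sandwich collapses: the only quantitative information is $|c_{S(k-1)+j} - c| = |c_j - c| - \lambda^j|c_{S(k-1)} - c|$, and turning this into $> |c_{S(k-1)}-c|$ requires exactly the ratio bound $|c_j - c| > (1+\lambda^j)|c_{S(k-1)}-c|$ that the qualitative inductive hypothesis does not supply. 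Your second suggested repair (case analysis on the nearer endpoint of $I_{k-1}^j$) fails for the same reason: $\dist(c, I_{k-1}^j) = \min\bigl(|c_j-c|,\,|c_{S(k-1)+j}-c|\bigr)$, and when the minimum is achieved at $c_{S(k-1)+j}$ you are back to the quantity you wanted to bound, with no new input. Your first suggested repair (carrying a quantitative ratio bound through the Fibonacci recursion) might be made to work, but it is not carried out and would be a substantially heavier induction than what is actually needed.

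The paper avoids this problem by choosing a different transported interval, one in which the target iterate of $c$ sits \emph{strictly in the interior}, with \emph{both} endpoints controlled by the inductive hypothesis. Concretely (stating it at level $k+1$ to match the paper's indexing, i.e.\ proving $|c_i - c| > |c_{S(k)}-c|$ for $0 < i < S(k+1)$, $i \neq S(k)$), the range $S(k) < i < S(k+1)$ is split at $i = S(k)+S(k-2)$ rather than treated as one block. For $i = S(k)+m$ with $0 < m < S(k-2)$ one uses injectivity of $T^m$ on $[c_{S(k-1)+1}, c_1] \supset [c_{S(k)+1}, c_1]$ to get $c_{S(k)+m} \in (c_{S(k-1)+m}, c_m)$; by (\ref{eq:cut_time}) the two endpoints $c_{S(k-1)+m}$ and $c_m$ lie on the \emph{same} side of $c$, and both have index $< S(k)$ and $\neq S(k-1)$, so both are at distance $> |c_{S(k-1)}-c| > |c_{S(k)}-c|$ from $c$ by the inductive hypothesis and (\ref{def:fibonacci}). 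Since $c_{S(k)+m}$ lies between them, it inherits the same bound with no quantitative input needed. The block $S(k)+S(k-2) < i < S(k+1)$ is handled by one more level of the same device (transporting from $[c_{S(k-2)+1}, c_1]$), and the single boundary index $i = S(k)+S(k-2)$ is disposed of by a short contradiction argument. The moral is that one step of the Fibonacci decomposition, as in your proposal, puts the target on the boundary of the controlled interval; two nested steps, as in the paper, put it in the interior, which is exactly what makes the purely qualitative induction close.
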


\begin{proof}
  We will use induction on $k$. The cases $k = 0\text{ and } 1$ are vacuously true, the cases $k = 2 \text{ and } 3$ are true by the definition of Fibonacci map and
  (\ref{def:fibonacci_2}).
  Suppose now that it is true for $k$. We will prove that is
  true for $k+1$. \newline
  \textbf{Case 1:}
  Since \[ |c_{S(k-1)} - c| > |c_{S(k)} - c|,\] we have that  \[ |c_i - c| > |c_{S(k)} - c|,  \] for all $0 < i < S(k)$.\newline
  \textbf{Case 2:} 
  Since \[ c_{S(k-1) +1} < c_{S(k) +1} < c_1 \] and $T^i$ is injective on $[c_{S(k-1) +1},c_1]$ for $0<i<S(k-2)$ by Lemma \ref{lemm:T_injective}, we have that
  $c_{S(k) +i} \in (c_{S(k-1) + i}, c_i)$, for $0 < i < S(k-2)$. By the induction hypothesis, \[ |c_i -c|>|c_{S(k-1)}-c|>|c_{S(k)} -c| \]
  and \[ |c_{S(k-1) + i} - c|>|c_{S(k-1)} - c|>|c_{S(k)} - c|, \] for $0 < i < S(k-2).$ By \ref{eq:cut_time}
  $c_i$ and $c_{S(k-1) +i}$ lie on the same side of $c$ for $0 < i < S(k-2)$.  The above implies
  that \[ |c_{S(k) + i} - c| > |c_{S(k)} - c| \] for all $0 < i < S(k-2)$.\newline
  \textbf{Case 3:}
  Since $T^{S(k-2) -1}$ is injective on $[c_{S(k-1)+1}, c_1]$ we get that $c_{S(k)+S(k-2)} \in (c_{S(k)}, c_{S(k-2)})$. Also, by (\ref{eq:cut_time}),  $c_{S(k) +S(k-2)}$ and $c_{S(k-2)}$ lie on
  the same side of $c$, and opposite to $c_{S(k)}$. Then \[ |c_{S(k) + S(k-2)} - c|<|c_{S(k-2)} - c|.\]
  Hence \[ c_{S(k-2) + 1} < c_{S(k) + S(k-2) +1} < c_1. \] So by Lemma \ref{lemm:T_injective}, $c_{S(k) + S(k-2) +i} \in (c_{S(k-2)+i} , c_i)$ for
  $0 < i < S(k-3)$. By the induction hypothesis \[ |c_i - c|>|c_{S(k)} - c| \hspace{10pt} \text{ and } \hspace{10pt} |c_{S(k-2) + i} - c|>|c_{S(k)} - c|, \] for $0<i<S(k-3).$ Since, by
  (\ref{eq:cut_time}), $c_{S(k-2) +i}$ and $c_i$ lie on the same side of $c$ for $0<i<S(k-3)$ we get \[ |c_{S(k) + S(k-2) +i} -c| > |c_{S(k)} - c|, \] for all $0<i<S(k-3).$ \newline
  \textbf{Case 4:} It remains to prove that \[ |c_{S(k) + S(k-2)} - c|>|c_{S(k)} - c|.\]
  Suppose by contradiction that \[ |c_{S(k) + S(k-2)} - c|<|c_{S(k)} - c|.\] Then $c_{S(k) +1} < c_{S(k) + S(k-2) +1} < c_ 1$. Since $T^{S(k-3) -1}$ is injective on
  $[c_{S(k) +1}, c_1]$ we get that $c_{S(k+1)} \in (c_{S(k-3)},c_{S(k) + S(k-3)})$. Noting that by (\ref{eq:cut_time}) $c_{S(k-3)}$ and $c_{S(k) + S(k-3)}$ are in the same side with respect to
  $c$ we have either \[|c_{S(k+1)}-c|>|c_{S(k) + S(k-3)}-c|>|c_{S(k)}-c| \] or \[|c_{S(k+1)}-c|>|c_{S(k-3)}-c|>|c_{S(k)}-c|, \]
  a contradiction. So we must have \[ |c_{S(k) + S(k-2)}-c|>|c_{S(k)}-c|, \] and this conclude the proof.
 \end{proof}
 Let us denote \[ J_k \= I_{k+1}^{S(k-1)} = [c_{S(k-1)}, c_{S(k+1) + S(k-1)}],\] and put \[ D_k \= [c,c_{S(k)}] \] for every $k \geq 1$. For every $n \geq 0$ we use the notation
 \begin{equation}
   \label{eq:J_k_definition}
   J^n_k \= T^n(J_k) = I^{S(k-1) + n}_{k+1}.
\end{equation}
 Note that by definition $D_{k'} \subset [c_{S(k)},c_{S(k+2)}]$, for every $k' \geq k \geq 1.$
 \begin{lemm}
  \label{lemm:J_k}
  For all $0< k <k'$ we have $J_{k'} \subset D_{k'-1} \subset I_{k}$ and $J_{k} \cap J_{k'} = \emptyset$.
\end{lemm}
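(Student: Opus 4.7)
The plan is to prove the two inclusions $D_{k'-1} \subset I_k$ and $J_{k'} \subset D_{k'-1}$, and then the disjointness $J_k \cap J_{k'} = \emptyset$, in that order.

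The inclusion $D_{k'-1} \subset I_k$ follows from convexity of $I_k$: the point $c_{S(k'-1)}$ lies in $I_k$ since $k'-1 \geq k$ and $I_k$ contains $c_{S(l)}$ for every $l \geq k$ by definition, while $c \in I_k$ because by (\ref{eq:I_K}) the two endpoints of $I_k$ lie on opposite sides of $c$.

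For $J_{k'} \subset D_{k'-1}$, the key observation is the nesting $I_{k'+1} \subset I_{k'-1}$, which yields
\[
J_{k'} \;=\; T^{S(k'-1)}(I_{k'+1}) \;\subset\; T^{S(k'-1)}(I_{k'-1}) \;=\; I_{k'-1}^{S(k'-1)}.
\]
I compute the right-hand side explicitly. By (\ref{eq:interval_image}) with $k=k'-1$, the interval $I_{k'-1}^{S(k'-2)} = [c_{S(k'-2)}, c_{S(k')}]$ contains $c$; one application of $T$ folds it onto $[c_{S(k'-2)+1}, c_1]$ (since $|c - c_{S(k'-2)}| > |c - c_{S(k')}|$); and Lemma~\ref{lemm:T_injective} (with $k=k'-2$) then yields $T^{S(k'-3)-1}$ injective on $[c_1, c_{S(k'-2)+1}]$, mapping it onto $[c_{S(k'-3)}, c_{S(k'-1)}]$. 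Since $c_{S(k'-3)}$ and $c_{S(k'-1)}$ lie on opposite sides of $c$, this interval is split by $c$ into two halves. To identify which half contains $J_{k'}$, I verify that the inner endpoint $c_{S(k'+1)+S(k'-1)}$ of $J_{k'}$ lies on the same side of $c$ as $c_{S(k'-1)}$: applying (\ref{eq:cut_time}) with $k$ replaced by $k'+2$ (so $Q(k'+2)=k'$) gives $e_{S(k'+1)+j} = e_j$ for every $j \in \{1,\dots,S(k')-1\}$, and taking $j = S(k'-1) < S(k')$ shows $e_{S(k'+1)+S(k'-1)} = e_{S(k'-1)}$. Hence $J_{k'} \subset [c, c_{S(k'-1)}] = D_{k'-1}$.

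For the disjointness $J_k \cap J_{k'} = \emptyset$ with $0 < k < k'$, I argue by cases on whether $c_{S(k-1)}$ and $c_{S(k'-1)}$ lie on the same side of $c$, a condition determined by $(k-1)\!\!\mod 4$ versus $(k'-1)\!\!\mod 4$ via the parity analysis at the start of Section~\ref{sec:post-critical}. If they lie on opposite sides, then $D_{k-1} \cap D_{k'-1} = \{c\}$, and since $c \notin I_{k+1}^{j}$ for $0 < j < S(k)$ (as noted after (\ref{eq:interval_image})) we have $c \notin J_k \cup J_{k'}$, so the intersection is empty. If they lie on the same side and $k' \geq k+2$, applying Lemma~\ref{lemm:orbit_order} with $k$ replaced by $k+2$ gives $|c_{S(k+1)+S(k-1)} - c| > |c_{S(k+1)} - c|$, and combining with (\ref{def:fibonacci}) I obtain $|c_{S(k+1)+S(k-1)} - c| > |c_{S(k'-1)} - c|$, so that $D_{k'-1}$ stops short of the inner endpoint of $J_k$ and the two intervals are disjoint. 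The remaining sub-case $k' = k+1$ on the same side forces $k$ to be even and requires the finer estimate $|c_{S(k+1)+S(k-1)}| > |c_{S(k)}|$; I would obtain this by exploiting the bijection $T^{S(k-2)}\colon J_k \to I_{k+1}^{S(k)} = [c_{S(k)}, c_{S(k+2)}]$ (which is injective because $c \notin I_{k+1}^{j}$ for $0 < j < S(k)$) together with the identity $c_{S(k)} = T^{S(k-1)}(c_{S(k-2)})$ and the decay $d_{k+1} < d_{k-2}$, positioning the endpoints of $J_k$ relative to $c_{S(k)}$. This last step is where I expect the main obstacle to lie.
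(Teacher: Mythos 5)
Your decomposition is a genuine alternative to the paper's. The paper first proves $J_{m}\subset D_{m-1}$ and $c\notin J_m$ for all $m\geq 1$, then proves separately that $J_m\cap I_m=\emptyset$, and obtains $J_k\cap J_{k'}=\emptyset$ for $k<k'$ because $J_{k'}\subset I_{k'-1}\subseteq I_k$. You instead compare $J_k$ directly with $D_{k'-1}\supset J_{k'}$, splitting into cases by whether $c_{S(k-1)}$ and $c_{S(k'-1)}$ lie on the same side of $c$. Your first part ($J_{k'}\subset D_{k'-1}\subset I_k$, with $c\notin J_{k'}$ via the cut-time identity at $k'+2$) is correct, modulo the small base case $k'=2$ where the invocation of Lemma~\ref{lemm:T_injective} with $k=k'-2=0$ is out of range and should be treated separately. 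Your opposite-sides case and your same-side case with $k'\geq k+2$ are both sound — the latter cleanly uses Lemma~\ref{lemm:orbit_order} with $k\mapsto k+2$ together with \eqref{def:fibonacci}.

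The gap is the sub-case you flag yourself: $k'=k+1$ with $k$ even, where you need $|c_{S(k+1)+S(k-1)}-c|>|c_{S(k)}-c|$. This is precisely the hard half of the paper's claim $J_m\cap I_m=\emptyset$ (your $k$ is the paper's $k+1$): since $J_m\subset D_{m-1}$ lies on the $D_m$-side of $I_m$ when $m$ is even, showing $J_m\cap I_m=\emptyset$ is exactly showing $J_m\cap D_m=\emptyset$. Neither Lemma~\ref{lemm:orbit_order} nor \eqref{def:fibonacci} reaches far enough: orbit order at $k+2$ only gives $|c_{S(k+1)+S(k-1)}-c|>|c_{S(k+1)}-c|$, one Fibonacci step short of what you need, and applying it at $k+3$ fails because $S(k+1)+S(k-1)\geq S(k+2)$ so the index is out of range. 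Your proposed remedy — the linear bijection $T^{S(k-2)}\colon J_k\to[c_{S(k)},c_{S(k+2)}]$ together with $c_{S(k)}=T^{S(k-1)}(c_{S(k-2)})$ and the decay of the $|D_j|$ — does not obviously close this: unwinding it leads to an inequality of the form $(\lambda^{S(k-1)}-1)|D_k|>(1+\lambda^{S(k-1)})|D_{k+1}|$, which is not an immediate consequence of the recursion $\lambda^{S(k-1)}|D_k|=|D_{k-1}|+|D_{k+1}|$ and is delicate for small $k$. The paper closes the corresponding gap by contradiction: assume $|c_{S(k+1)}-c|>|c_{S(k)+S(k+2)}-c|$, push the bracketed interval forward by $T^{S(k-1)}$ (which is still injective there), and then Lemma~\ref{lemm:orbit_order} with $k\mapsto k+4$ gives two incompatible orderings of $c_{S(k+1)+S(k-1)}$ relative to $c_{S(k+3)}$. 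You need an argument of that strength; the bijection sketch as written is not it.
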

\begin{proof}
  First we will prove that $J_{k+1}$ is contained in $D_k$ and $c \notin J_{k+1}$ for every $k \geq 0$. Fix $k \geq 0$.
  By (\ref{eq:cut_time}) with $k$ replaced by $k+3$ we have $c_{S(k)}$ and $c_{S(k+2) + S(k)}$ are in the same side of $c$. Since \[ |c_{S(k+2)} - c| < |c_{S(k+1)} -c|,\] we have
  \[c_{S(k+1) +1}<c_{S(k+2) +1}<c_1.\] By Lemma \ref{lemm:T_injective}, $T^{S(k) -1}$ is injective on $I_{k+1}^1$. Then, $c_{S(k+2) + S(k)} \in (c_{S(k+2)} , c_{S(k)})$. By (\ref{eq:cut_time})
  with $k$ replaced by $k+3$, we thus conclude $c_{S(k+2) + S(k)} \in (c_{S(k)} , c)$. Then
  \[J_{k+1} = [c_{S(k)}, c_{S(k) + S(k+2)}] \subset [c, c_{S(k)}] = D_k\subset [c_{S(k)},c_{S(k+2)}] \subseteq I_k \] and $c \notin J_{k+1}$. Since, by definition, for every $k' > k$ we have
  $D_{k'-1} \subset I_{k'-1} \subset I_k$, we get \[ J_{k'} \subset D_{k'-1} \subset I_k.  \]
  Now we will prove that $J_{k+1}$ and $I_{k+1}$ are disjoint.
  If $k$ is even then $I_{k+1} = [c_{S(k+1)} , c_{S(k+3)}]$. Since $c_{S(k)}$ and $c_{S(k+1)}$ lie on opposite sides respect to $c$, we have that $c_{S(k)}$, $c_{S(k) + S(k+2)}$, and
  $c_{S(k+3)}$ lie on the same side of $c$. By (\ref{lemm:orbit_order}), with $k$ replaced by $k+4$, we get \[ |c_{S(k)} - c|>|c_{S(k) + S_{k+2}} - c|>|c_{S(k+3)} - c|. \] So
  $J_{k+1} \cap I_{k+1} = \emptyset.$ Now, if $k$ is odd $I_{k+1} = [c_{S(k+1)}, c_{S(k+2)}]$ and $c_{S(k)}$, $c_{s(k+1)}$ and $c_{S(k) + S(k+2)}$ lie on the same side of $c$. Suppose that
  $|c_{S(k+1)} - c| > |c_{S(k) + S(k+2)} - c|$, then \[[c_{S(k+1)},c_{S(k) + S(k+2)}] \subset [c_{S(k)},c_{S(k) + S(k+2)}].\] Since $T^{S(k-1)}$ is injective on $[c_{S(k)},c_{S(k) + S(k+2)}]$, then
  $T^{S(k-1)}$ is injective on \newline $[c_{S(k+1)},c_{S(k) + S(k+2)}]$. So we get \[T^{S(k-1)}([c_{S(k+1)},c_{S(k) + S(k+2)}]) = [c_{S(k+1)+S(k-1)},c_{S(k+3)}].\] Since
  $S(k+1) + S(k-1) < S(k+4),$ by
  Lemma \ref{lemm:orbit_order}, with $k$ replaced by $k+4$, we get \[|c_{S(k+1) + S(k-1)} - c|>|c_{S(k+3)} - c|.\] On the other hand, $T^{S(k-1)}(J_{k+1}) = [c_{S(k+1)}, c_{S(k+3)}]$, then
  \[T^{S(k-1)}(c_{S(k+1)}) = c_{S(k+1) + S(k-1)} \in (c_{S(k+1)} , c_{S(k+3)})\] and by (\ref{eq:cut_time}), with $k$ replaced by $k+1$, we have that $c_{S(k+1) + S(k-1)}$ and $c_{S(k-1)}$ are in
  the same side of $c$. Since $k-1 \equiv k+3  \pmod4$, we have that $c_{S(k-1)}$ and $c_{S(k+3)}$ are on the same side of $c$, so
  $c_{S(k+1) + S(k-1)} \in (c,c_{S(k+3)})$. Thus \[ |c_{S(k+1) + S(k-1)} - c|<|c_{S(k+3)} - c|, \] a contradiction. So we must have $c_{S(k) + S(k+2)} \in (c_{S(k)}, c_{S(k+1)})$ and
  \begin{equation}
    \label{eq:empty_intersection}
    J_{k+1}~\cap~I_{k+1}~=~\emptyset. 
  \end{equation}
  This conclude the proof of the lemma.
  
\end{proof}
Taking $k' = k+1$ in Lemma (\ref{lemm:J_k}) we get $J_{k+1} \subset I_k$, and then 
\begin{equation}
  \label{eq:J_k_I_k_1}
  J_{k+1} \cup I_{k+1} \subset I_k \subseteq I_k^{S(k-1)},
\end{equation}
for every $k \geq 1$
\begin{defi}
  For $k \geq 0$ let $M_k$ be the $S(k)-$fold union \[ M_k = \bigcup_{0 \leq n < S(k-1) } I^n_k \cup \bigcup_{0 \leq n < S(k-2)} J^n_k.  \]
\end{defi}

\noindent Some examples of $M_k$,
\begin{eqnarray}
  M_0 & = & I_0 \nonumber \\
      & = & [c_1, c_2], \nonumber \\
  M_1 & = & I_1 \cup J_1 \nonumber \\
      & = & [c_2, c_5] \cup [c_4,c_1], \nonumber \\
  M_2 & = & I_2 \cup I^1_2 \cup J_2 \nonumber \\
      & = & [c_3, c_5] \cup [c_4, c_1] \cup [c_2,c_7] \nonumber \\
  M_3 & = & I_3 \cup I^1_3 \cup I^2_3 \cup J_3 \cup J^1_3 \nonumber \\
      & = & [c_{13}, c_5] \cup [c_6, c_1] \cup [c_2, c_7] \cup [c_3, c_{11}] \cup [c_4, c_{12}] \nonumber \\
  M_4 & = & I_4 \cup I^1_4 \cup I^2_4 \cup I^3_4 \cup I^4_4 \cup J_4 \cup J^1_4 \cup J^2_4 \nonumber \\
      &= & [c_{13}, c_8] \cup [c_9, c_1] \cup [c_2, c_{10}] \cup [c_3, c_{11}] \cup [c_4, c_{12}] \cup [c_{14}, c_5] \cup [c_6, c_{19}] \cup [c_{20}, c_7], \nonumber 
\end{eqnarray}
\noindent and so on.

From the definition, for every $k \geq 0$ the $S(k)$ closed intervals \[ I_k, I^1_k, \ldots , I^{S(k-1) -1}_k, J_k, J^1_k, \ldots , J^{S(k-2)-1}_k,\] are pairwise disjoint, each $M_k$
contains the set $\overline{\cO_T(c)}$ and they
form a nested sequence of closed sets $M_1 \supset M_2 \supset M_3 \supset \ldots$ with intersection equal to the cantor set $\overline{\cO_T(c)}$, for a proof of this
statements see \cite[Lemma 3.5]{LyMi93}.
Now by (\ref{eq:J_k_I_k_1}) we have that for every $1 \leq m < S(k-1)$, \[ I_{k+1}^m \cup J_{k+1}^m \subset I_k^m. \] Also by (\ref{eq:J_k_definition}) for every $0 \leq n < S(k-2)$,
\[ I_{k+1}^{S(k-1) + n} = J_k^n. \] Since the sets in $M_k$ are disjoint we get that
\begin{equation}
  \label{eq:J_k_I_k}
  \cup_{A \in M_{k+1}} (A \cap I_k) = I_{k+1} \cup J_{k+1.} 
\end{equation}


\subsection{Diameter estimates}
In this section we will give an estimate on how the distances $|c_{S(k)} - c|$ decrease as $k \to \infty.$
 
\begin{lemm}
  \label{lemm:diam_estimate}
  The following limit \[ \lim_{k \to \infty} \lambda^{S(k+1)}|D_k|\] exists and is strictly positive. 
\end{lemm}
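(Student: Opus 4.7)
The plan is to derive a two-term recursion for $d_k \= |D_k| = |c_{S(k)}|$ and then show that the sequence $a_k \= \lambda^{S(k+1)} d_k$ is strictly increasing and bounded above.

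First I would establish that, for every $k\ge 1$, the map $T^{S(k-1)}$ is a linear homothety on $D_k$ with slope of absolute value $\lambda^{S(k-1)}$. Since $c=0$ is the turning point of $T$ and sits as an \emph{endpoint} (not interior point) of $D_k=[c,c_{S(k)}]$, the map $T$ is monotone on $D_k$; consequently $T(D_k)=[c_{S(k)+1},c_1]$. Lemma \ref{lemm:T_injective} then provides injectivity of $T^j$ on $[c_1,c_{S(k)+1}]$ for $j\in\{1,\dots,S(k-1)-1\}$, so $T^{S(k-1)}$ is injective (and hence linear with slope $\pm\lambda^{S(k-1)}$) on $D_k$. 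Evaluating at the endpoints of $D_k$ gives $T^{S(k-1)}(D_k)=[c_{S(k-1)},\,c_{S(k-1)+S(k)}]=[c_{S(k-1)},c_{S(k+1)}]$. By the parity discussion at the opening of \S\ref{sec:post-critical}, $c_{S(k-1)}$ and $c_{S(k+1)}$ lie on opposite sides of $c$, so
\begin{equation*}
\lambda^{S(k-1)}\,d_k \;=\; |T^{S(k-1)}(D_k)|\;=\;|c_{S(k+1)}-c_{S(k-1)}|\;=\;d_{k+1}+d_{k-1}.
\end{equation*}

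Next, I would translate this into a recursion for $a_k=\lambda^{S(k+1)} d_k$. Multiplying both sides by $\lambda^{S(k+2)}$ and using the Fibonacci identities $S(k+1)=S(k)+S(k-1)$ and $S(k+2)=S(k+1)+S(k)$, one obtains
\begin{equation*}
a_{k+1} \;=\; \lambda^{S(k+1)}\bigl(a_k-a_{k-1}\bigr).
\end{equation*}
Because $a_{k+1}>0$ and $\lambda>1$, this immediately yields $a_k>a_{k-1}$, so $\{a_k\}$ is strictly increasing; in particular the limit (if it exists) is at least $a_0=\lambda^{2}(\lambda-1)>0$.

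For boundedness, I would use the Fibonacci monotonicity property \eqref{def:fibonacci}, namely $d_{k+1}<d_k$. Substituting into the recursion gives
\begin{equation*}
(\lambda^{S(k-1)}-1)\,d_{k+1} \;<\; \lambda^{S(k-1)} d_k - d_{k+1} \;=\; d_{k-1},
\end{equation*}
so $d_{k+1}/d_{k-1}<1/(\lambda^{S(k-1)}-1)$. Dividing the recursion by $d_{k-1}$ and then multiplying by $\lambda^{S(k-1)}$ yields
\begin{equation*}
\frac{a_k}{a_{k-1}} \;=\; \lambda^{S(k-1)}\,\frac{d_k}{d_{k-1}} \;=\; 1+\frac{d_{k+1}}{d_{k-1}} \;<\; 1+\frac{1}{\lambda^{S(k-1)}-1}.
\end{equation*}
Since $\lambda>1$ and $S(k)$ grows at least geometrically, the series $\sum_{k\ge 1} 1/(\lambda^{S(k-1)}-1)$ converges, hence the telescoping product $\prod_{k\ge 1}\bigl(1+1/(\lambda^{S(k-1)}-1)\bigr)$ converges to a finite value $C>1$. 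Therefore $a_k\le C a_0$ for all $k$, and $\{a_k\}$, being monotone increasing and bounded, converges to some $L\in[a_0,Ca_0]\subset(0,\infty)$.

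The main obstacle is justifying the linearity step cleanly: one must verify that $0$ enters $T^j(D_k)$ only as the endpoint image $T^{S(k-1)}(0)=c_{S(k-1)}$ of the final iterate, which amounts to checking the proof of Lemma \ref{lemm:T_injective} carefully with $c_1$ and $c_{S(k)+1}$ as the moving endpoints. Once the linear identity $\lambda^{S(k-1)} d_k=d_{k+1}+d_{k-1}$ is in hand, the rest is elementary manipulation of the Fibonacci indices and a convergent-product estimate.
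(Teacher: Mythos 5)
Your proof is correct and follows essentially the same route as the paper's: both hinge on the linearity of $T^{S(k-1)}$ on $D_k$ (via Lemma~\ref{lemm:T_injective} together with the fact that $c$ is an endpoint, not an interior point, of $D_k$) to obtain the key identity $\lambda^{S(k-1)}|D_k|=|D_{k-1}|+|D_{k+1}|$, and then reduce convergence of $\lambda^{S(k+1)}|D_k|$ to the convergence of an infinite product controlled by $\sum_k \lambda^{-S(k)}<\infty$. The paper phrases the product step in terms of $\nu_k=|D_k|/|D_{k+1}|$ and $C_k=\nu_k\lambda^{-S(k)}$; your $a_k=\lambda^{S(k+1)}|D_k|$ satisfies $a_k/a_{k-1}=1/C_{k-1}$, so your bound $a_k/a_{k-1}<1+1/(\lambda^{S(k-1)}-1)$ and the paper's $1-C_{k-1}<\lambda^{-S(k-1)}$ are the same estimate in different clothing. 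Your explicit remark that $T$ is monotone on $D_k$ because $c$ is an endpoint makes the injectivity step a bit cleaner than the paper's appeal to ``injective on $I_k$.''
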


\begin{proof}
  Since \[T(D_k) = T([c,c_{S(k)}]) = I_k^1,\] by (\ref{eq:interval_image}) we have that \[T^{S(k-1)}(D_k) = [c_{S(k-1)}, c_{S(k+1)}] .\]
  By (\ref{eq:cut_time}), $c_{S(k-1)}$ and $c_{S(k+1)}$ are in opposite sides of $c$. Then $D_{k-1} \cap D_{k+1} = \{c\}$, so \[T^{S(k-1)}(D_k) = D_{k-1} \cup D_{k+1} .\] Since, by
  Lemma~\ref{lemm:T_injective}, $T^{S(k-1)}$ is injective on $I_k$ and $D_k \subset I_k$ we get that \[ |T^{S(k-1)}(D_k)| = \lambda^{S(k-1)} |D_k| = |D_{k-1}| +|D_{k+1}|.\]
  For $k \geq 0$ put  $\nu_k\= |D_k| / |D_{k+1}|$. By the above we get \[ \lambda^{S(k-1)} = \nu_{k-1} + \frac{1}{\nu_k}. \] By (\ref{def:fibonacci}),
  $\nu_k > 1$, so $0 < \nu_k^{-1} <1$. Since
  $\lambda >1$, we have $\lambda^{S(k-1)} \longrightarrow \infty$ as $k \longrightarrow \infty$. Then $\nu_k \longrightarrow \infty$ as $k \longrightarrow \infty$. So
  $\lambda^{S(k-1)} - \nu_{k-1} \longrightarrow 0$ as $k \longrightarrow \infty.$ Then, if we define $C_k \=\nu_k\lambda^{-S(k)}$, we have $0 < C_k < 1$ and $C_k \nearrow 1$
  exponentially fast as $n \longrightarrow\infty.$ By definition of $\nu_k$, we have that
  \[ \frac{|D_0|}{|D_{k+1}|} = \prod_{i=0}^k \nu_i 
                            = \prod_{i=0}^k \lambda^{S(i)}C_i 
                            =\lambda^{S(k+2) - S(1)}\prod_{i=0}^kC_i. \]
  Then 
  \begin{equation}
    \label{eq:diameter_estimate}
    |D_{k+1}|\lambda^{S(k+2) - S(1)} = |D_0| \left[ \prod_{i = 0}^k C_i \right]^{-1}. 
  \end{equation}
  Since $ \prod_{i = 0}^k C_i $ converge to a strictly positive number as $k \longrightarrow \infty$, the proof is complete.
\end{proof}

\section{The invariant measure}
 
Let us denote by $\mu_P$ the unique ergodic invariant measure of $T$ restricted to $\overline{\cO_T(c)}$. As in the previous, section put
\[ S(-2) = 0, S(-1) = 1, S(0) = 1, S(1) = 2, \ldots \]   and put $\varphi \= \frac{1 + \sqrt{5}}{2}$. In this section we will
estimate the value of $\mu_P$ over the elements of $M_k$ for every $k \geq 1$.

As mentioned in \cref{sec:post-critical}, we know that the set $\overline{\cO_T(c)}$ is contained in $M_k$ for very $k \geq 1$ and the sets
\begin{equation}
I_k, I_k^1, \ldots I_k^{S(k-1)-1}, J_k,J_k^1,\ldots , J_k^{S(k-2)-1}, \nonumber
\end{equation}
are disjoint. Then
\begin{equation}
  \label{eq:measure_1}
  \sum_{i=0}^{S(k-1)-1}\mu_P(I_k^i) +  \sum_{j = 0}^{S(k-2)-1}\mu_P(J_k^j) = 1.
\end{equation}
Since $T$ restricted to $\overline{\cO_T(c)}$  is injective, except at the critical point that has two preimages, we have that
\begin{eqnarray}
  \label{eq:measure_2}
  \mu_P(I_k^i) &=& \mu_P(I_k^j)\\
  \mu_P(J_k^p) &=& \mu_P(J_k^q),
\end{eqnarray}
for every $0 \leq i,j < S(k-1)$ and $0 \leq p,q < S(k-2)$. Then we can write (\ref{eq:measure_1}) as
\begin{equation}
  \label{eq:measure_3}
  S(k-1)\mu_P(I_k) + S(k-2)\mu_P(J_k) = 1
\end{equation}
Since \[ I_k \sqcup J_k \subset I_{k-1},\] we have that
\begin{equation}
  \label{eq:measure_4}
  \mu_P(I_k) + \mu_P(J_k) = \mu_P(I_{k-1}).
\end{equation}
And since \[J_{k-1} =I_k^{S(k-1)},\]using (\ref{eq:measure_2}) with $k$ replaced by $k=1$, we have that
\begin{equation}
  \label{eq:measure_5}
  \mu_P(J_{k-1}) = \mu_P(I_k).
\end{equation}
Combining (\ref{eq:measure_4}) and (\ref{eq:measure_5}) we can write
\begin{equation}
  \label{eq:matrix_measure}
  \begin{bmatrix}
    1 & 1 \\ 1 & 0
  \end{bmatrix}
  \begin{bmatrix}
    \mu_P(I_k) \\
    \mu_P(J_k)
  \end{bmatrix}
  =
  \begin{bmatrix}
    \mu_P(I_{k-1}) \\ \mu_P(J_{k-1})
  \end{bmatrix}.  
\end{equation}
\begin{lemm}
  \label{lemm:measure_estimate}
  For every $m \geq 1$ we have \[ \mu_P(I_m) = \frac{1}{\varphi^m} \hspace{0.5cm} \text{ and } \hspace{0.5cm} \mu_P(J_m) = \frac{1}{\varphi^{m+1}}. \] 
\end{lemm}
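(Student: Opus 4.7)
The plan is to reduce the coupled two-dimensional recursion (\ref{eq:matrix_measure}) to a scalar linear recurrence for $a_m := \mu_P(I_m)$, solve it by the characteristic polynomial, and recover $\mu_P(J_m)$ at the end. From (\ref{eq:measure_4}) one has $a_m + b_m = a_{m-1}$ with $b_m := \mu_P(J_m)$, while (\ref{eq:measure_5}) reads $b_{m-1} = a_m$, i.e.\ $b_m = a_{m+1}$. Substituting gives the scalar three-term recurrence
\[
a_{m-1} \,=\, a_m + a_{m+1}, \qquad m \geq 1.
\]

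The associated characteristic equation $1 = r + r^2$ has roots $r_+ = \varphi^{-1}$ and $r_- = -\varphi$, where $\varphi = (1+\sqrt5)/2$. Hence every solution is of the form
\[
a_m \,=\, A\,\varphi^{-m} \,+\, B\,(-\varphi)^m
\]
for constants $A,B$. Since $\mu_P$ is a probability measure, $0 \leq a_m \leq 1$ for all $m \geq 0$; because $|-\varphi| = \varphi > 1$, a nonzero $B$ would force $|a_m| \to \infty$, so $B = 0$. (Alternatively one can use $\bigcap_m I_m = \{c\}$ together with the absence of atoms of $\mu_P$, which follows from the minimality of $T$ on the Cantor set $\overline{\cO_T(c)}$, to conclude $a_m \to 0$.) To pin down $A$, observe that (\ref{eq:measure_1}) applied at $k=1$ (where $S(0) = S(-1) = 1$) gives $a_1 + b_1 = 1$, while (\ref{eq:measure_4}) at $k=1$ gives $a_1 + b_1 = a_0$. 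Thus $a_0 = 1$, which forces $A = 1$ and so $\mu_P(I_m) = \varphi^{-m}$.

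For the second formula, using (\ref{eq:measure_4}) and the identity $\varphi - 1 = \varphi^{-1}$:
\[
\mu_P(J_m) \,=\, \mu_P(I_{m-1}) - \mu_P(I_m) \,=\, \varphi^{-(m-1)} - \varphi^{-m} \,=\, \varphi^{-m}(\varphi - 1) \,=\, \varphi^{-(m+1)}.
\]
There is no serious obstacle in this argument; the only point requiring a moment of care is the elimination of the growing mode $(-\varphi)^m$ in Step 3, which genuinely uses that $\mu_P$ is a finite (probability) measure rather than merely invariant — without this boundedness input the scalar recurrence has a one-parameter family of admissible solutions.
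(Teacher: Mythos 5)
Your proof is correct, and it takes a genuinely different route from the paper's. The paper proves the base case $m=1$ by iterating the matrix recursion (\ref{eq:matrix_measure}) to get
\[
\begin{bmatrix} 1 & 1 \\ 1 & 0 \end{bmatrix}^{k-1}
\begin{bmatrix} \mu_P(I_k) \\ \mu_P(J_k) \end{bmatrix}
=
\begin{bmatrix} \mu_P(I_1) \\ \mu_P(J_1) \end{bmatrix},
\]
identifying the entries of the Fibonacci matrix power, combining with the normalization $S(k-1)\mu_P(I_k)+S(k-2)\mu_P(J_k)=1$ from (\ref{eq:measure_3}), and passing to the limit $k\to\infty$ using $S(k)/S(k-1)\to\varphi$ to deduce $\varphi\,\mu_P(I_1)=1$; the inductive step then matches your final displayed computation. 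You instead collapse (\ref{eq:measure_4}) and (\ref{eq:measure_5}) into the scalar three-term recurrence $a_{m-1}=a_m+a_{m+1}$, write down the full two-parameter solution $A\varphi^{-m}+B(-\varphi)^m$, and discard the growing mode by the trivial bound $0\le a_m\le 1$ coming from $\mu_P$ being a probability measure. Your route buys a one-shot closed form with no induction and no limit over Fibonacci ratios; the paper's route is more computational and self-contained, never explicitly solving a recurrence. Both proofs are, at bottom, exploiting the same spectral fact — the paper's limit argument implicitly kills the contribution of the subdominant eigenvector of the Fibonacci matrix — so the difference is one of presentation: you make the linear-algebra structure explicit where the paper hides it in a limit. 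Your determination of the constant via $a_0=1$, obtained by comparing (\ref{eq:measure_1}) with (\ref{eq:measure_4}) at $k=1$, is clean and equivalent to the observation $\overline{\cO_T(c)}\subset I_0=M_0$; the paper instead extracts $a_1=1/\varphi$ directly.
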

\begin{proof}
  We will use induction to prove the lemma. For $m = 1$.
  We can apply  $k-2$ times the equation (\ref{eq:matrix_measure}) to
  \[
    \begin{bmatrix}
      \mu_P(I_{k-1}) \\ \mu_P(J_{k-1})
    \end{bmatrix},
  \]
  and we can write (\ref{eq:matrix_measure}) as
  \begin{equation}
    \label{eq:matrix_measure_2}
    \begin{bmatrix}
      1 & 1 \\ 1 & 0
    \end{bmatrix}^{k-1}
    \begin{bmatrix}
      \mu_P(I_k) \\
      \mu_P(J_k)
    \end{bmatrix}
    =
    \begin{bmatrix}
      \mu_P(I_{1}) \\ \mu_P(J_{1})
    \end{bmatrix}.  
  \end{equation}
  Using that
  
  \[
    \begin{bmatrix}
      1 & 1 \\ 1& 0
    \end{bmatrix}^{k-1}
    =
    \begin{bmatrix}
      S(k-2) & S(k-3) \\ S(k-3) & S(k-4)
    \end{bmatrix},
  \] for $k \geq 2$. We can write
  \begin{equation}
    \label{eq:matrix_measure_3}
    \begin{bmatrix}
      \mu_P(I_1) \\
      \mu_P(J_1)
    \end{bmatrix}
    =
    \begin{bmatrix}
      S(k-2)\mu_P(I_k) + S(k-3)\mu_P(J_k)  \\ S(k-3)\mu_P(I_k) + S(k-4)\mu_P(J_k)
    \end{bmatrix}.  
  \end{equation}
  Multiplying the first equation in (\ref{eq:matrix_measure_3}) by $\frac{S(k)}{S(k-1)}$ we get
  \begin{equation}
    \label{eq:measure_6}
    \frac{S(k)}{S(k-1)} \mu_P(I_1) = \frac{S(k-2)}{S(k-1)}S(k)\mu_P(I_k) + \frac{S(k-3)}{S(k-2)}\frac{S(k)}{S(k-1)}S(k-2)\mu_P(J_k).
  \end{equation}
  Using (\ref{eq:measure_3}) we can write (\ref{eq:measure_6}) as
  \small
  \begin{equation}
    \label{eq:measure_7}
    \frac{S(k)}{S(k-1)} \mu_P(I_1) = S(k)\mu_P(I_k) \left[ \frac{S(k-2)}{S(k-1)} - \frac{S(k-3)}{S(k-2)} \right] + \frac{S(k-3)}{S(k-2)} \frac{S(k)}{S(k-1)}.
  \end{equation}
  \normalsize
  Since $\frac{S(k)}{S(k-1)} \longrightarrow \varphi$ as $k \longrightarrow \infty$, taking limit on (\ref{eq:measure_7}) over $k$ we get \[ \varphi \mu_P(I_1) = 1,  \] and
  then \[\mu_P(I_1) =\frac{1}{\varphi}. \] Using (\ref{eq:measure_3}) with $k$ replaced by $1$ we get that \[ \mu_P(J_1) = \frac{1}{\varphi^2}. \]
  So the lemma holds for $m =1$.

  Suppose now that the result is true for $m$. By (\ref{eq:measure_5}) we have that
  \[ \mu_P(I_{m+1})= \mu_P(J_m) = \frac{1}{\varphi^{m+1}}. \] By (\ref{eq:measure_4}) we have that
  \begin{eqnarray}
    \mu_P(J_{m+1}) &=& \mu_P(I_m) - \mu_P(I_{m+1})  \nonumber \\
                   &=& \frac{1}{\varphi^m} - \frac{1}{\varphi^{m+1}} \nonumber \\
                   &=& \frac{1}{\varphi^m} \left( 1 - \frac{1}{\varphi} \right) \nonumber \\
                   &=& \frac{1}{\varphi^m} \frac{1}{\varphi^2} \nonumber \\
                   &=& \frac{1}{\varphi^{m+2}}, \nonumber
  \end{eqnarray}
 and we get the result.
\end{proof}

\section{Proof of Proposition \ref{prop:0}}
In this section we will give the proof of Proposition \ref{prop:0}. We use the same notation as in the previous section. Let $h \colon [-1,1] \to [-1,1]$ be as in
Proposition \ref{prop:0} and $f = h \circ T \circ h^{-1}$. As in Theorem \ref{thm:1}, put 
$\tilde{c} \= h(0)$.

Since $h$ has a non-flat critical point at $0$, by \eqref{eq:right_crit_point} and \eqref{eq:left_crit_point} there are $\alpha^+ >0$, $\alpha^- >0$, and $\delta > 0$ such that
  \begin{equation}
    \label{eq:l_1}
    e^{-M}|\hat{x}|^{\alpha^+} \leq |h'(\hat{x})| \leq e^{M} |\hat{x}|^{\alpha^+},
  \end{equation}
  for every $\hat{x} \in (0,\delta)$ and

  \begin{equation}
    \label{eq:l_2}
    e^{-M}|\hat{x}|^{\alpha^-} \leq |h'(\hat{x})| \leq e^{M} |\hat{x}|^{\alpha^-},
  \end{equation}
  for every $\hat{x} \in (-\delta, 0)$.
Since $c = 0$ and $c \notin I_k^1 = [c_{c_{S(k)+1}}, c_1],$ we have that there exist positive real numbers $W_1$ and $W_2$ such that for every
$x \in I_k$
\begin{equation}
  \label{eq:seq_bound}
  W_1 \leq |h'(T(x))| \leq W_2.
\end{equation}
\begin{proof}[Proof of Proposition \ref{prop:0}]
  
  By the chain rule we have
  \begin{equation}
    \label{eq:l_3}
    f'(x) = \lambda \frac{h'(T(h^{-1}(x)))}{h'(h^{-1}(x))},
  \end{equation}
  for every $x \in (h(-\delta), h(\delta)) \setminus \{ \tilde{c} \}.$ Let 
  $K \= \max \{ \lambda^{-1} e^M W_1^{-1}, \lambda e^M W_2 \}$. Then by \eqref{eq:seq_bound},
  \eqref{eq:l_1},
  \eqref{eq:l_2}, and \eqref{eq:l_3} we have that for every $x \in (\tilde{c}, h(\delta))$
  \begin{equation}
    \label{eq:l_6}
    \frac{1}{K|h^{-1}(x)|^{\alpha^+}} \leq |f'(x)| \leq \frac{K}{|h^{-1}(x)|^{\alpha^+}}, 
  \end{equation}
  and for every $x \in (h(-\delta), \tilde{c})$
  \begin{equation}
    \label{eq:l_7}
    \frac{1}{K|h^{-1}(x)|^{\alpha^-}} \leq |f'(x)| \leq \frac{K}{|h^{-1}(x)|^{\alpha^-}}.
  \end{equation}
 Now, from \eqref{eq:l_1} and \eqref{eq:l_2} there exist $M_1 >0$ and $M_2 > 0$ such that for every $x \in (0,\delta)
  $ \[ M_1^{-1} |x|^{\alpha^+ + 1} \leq |h(x)| \leq M_1 |x|^{\alpha^+ + 1}, \]
  and for every $x \in (-\delta,0)$
  \[ M_2^{-1} |x|^{\alpha^- + 1} \leq |h(x)| \leq M_2 |x|^{\alpha^- + 1}.  \]
  Since $h$ is a homeomorphism, there exist constants $M_3 > 0$ and $M_4 > 0$ such that  for every $x \in (\tilde{c}, h(\delta))$
  \begin{equation}
    \label{eq:l_4}
    M_3^{-1} |x - \tilde{c}|^{\frac{1}{\alpha^+ +1}} \leq |h^{-1}(x)| \leq 
    M_3 |x - \tilde{c}|^{\frac{1}{\alpha^+ + 1}}, 
  \end{equation}
  and for every $x \in (h(-\delta), \tilde{c})$
  \begin{equation}
    \label{eq:l_5}
    M_4^{-1} |x - \tilde{c}|^{\frac{1}{\alpha^- +1}} \leq |h^{-1}(x)| \leq 
    M_4 |x - \tilde{c}|^{\frac{1}{\alpha^- + 1}}.
  \end{equation}
  Then, by \eqref{eq:l_6}, \eqref{eq:l_7}, \eqref{eq:l_4}, and \eqref{eq:l_5}  we have that for every $x \in (\tilde{c}, h(\delta))$
  \[ \frac{1}{M_3 |x - \tilde{c}|^{\frac{\alpha^+}{\alpha^+ +1}}}  \leq 
  |f'(x)| \frac{M_3}{|x - \tilde{c}|^{\frac{\alpha^+}{\alpha^+ +1}}} , \]
  and for every $x \in (\tilde{c}, h(-\delta))$
  \[ \frac{1}{M_4 |x - \tilde{c}|^{\frac{\alpha^-}{\alpha^- +1}}} \leq 
  |f'(x)| \leq \frac{M_4}{|x - \tilde{c}|^{\frac{\alpha^-}{\alpha^- +1}}}. \]
  Thus, $f$ has a Lorenz-like singularity at $\tilde{c}$.
\end{proof}


\section{Proof of Proposition \ref{prop:1}}
In this section, we will give the proof of Proposition \ref{prop:1}. We will use the same notation as in the previous sections.  

First, take $\alpha \= \max \{ \alpha^+, \alpha ^- \} $, where $\alpha^+$ and $\alpha^-$ are given in Proposition \ref{prop:0}. From  \eqref{eq:thm_1} and
\eqref{eq:thm_2}, we get that for every $x \in (h(-\delta), h(\delta)) \setminus \{ \tilde{c} \}$

\begin{equation}
  \label{eq:thm_condition}
  \frac{1}{K |h^{-1}(x)|^{\alpha}} \leq |f'(x)|.
\end{equation}

\begin{proof}[Proof of Proposition \ref{prop:1}]
  First we prove $(i)$. By Lemma \ref{lemm:diam_estimate}, there is $k\geq 2$ so that (\ref{eq:thm_condition}) holds on
  $h(I_k) \setminus \{ \tilde{c} \}$ and such that $|f'| >1$ on
  $h(I_k) \setminus \{ \tilde{c} \}$. For $n > k$ put $L_n \= \lambda^{S(n+1)}|D_n|.$ Define
  \[ \log|f'|^+ \= \max\{0,\log|f'|  \}\hspace{15pt} \text{ and } \hspace{15pt} \log|f'|^- \= \{0,-\log|f'|  \}, \] on $I \setminus \{ \tilde{c} \}$.
  By Lemma \ref{lemm:J_k}, for every $n > k$, we have $J_n \subset I_k$ and for every 
  $k < n < n'$, we have $J_n \cap  J_{n'} = \emptyset$. So, since $\tilde{\mu}_P(\{ \tilde{c} \}) = 0$
  \[ \int \log|f'|^+d\tilde{\mu}_P \geq \int_{h(I_k)}\log|f'|d\tilde{\mu}_P\geq \sum_{n > k}\int_{h(J_n)}\log|f'|d\tilde{\mu}_P.\]
  Recall that $\varphi \= \frac{1 + \sqrt{5}}{2}$. Then  for each $n > k$ and $x \in J_n$, we have by Lemma (\ref{lemm:J_k}) and (\ref{eq:thm_condition})
  \begin{equation}
    \label{eq:f_derivative}
    |f'(h(x))| \geq  K^{-1} \frac{1}{|D_{n-1}|^{\alpha}} = K^{-1}\lambda^{\alpha S(n)} L_{n-1}^{-\alpha}. 
  \end{equation}
  By the above together with Lemma \ref{lemm:measure_estimate} and the fact that $S(n) \geq \frac{1}{3}\varphi^{n+2}$
  
  \begin{eqnarray}
  \label{eq:non_integrable_bound}
    \int_{h(J_n)} \log|f'|d\tilde{\mu}_P & \geq & \tilde{\mu}_P(h(J_n)) \log \left| K^{-1} \lambda^{\alpha S(n)} L_{n-1}^{-\alpha}\right|  \\
                                         & \geq & \left( \frac{1}{\varphi} \right)^{n+1} \left[ \alpha S(n) \log(\lambda) + \alpha \log(K^{1/\alpha}L_{n-1})^{-1}  \right] \nonumber \\
                                         & \geq & \frac{\varphi \alpha}{3}\log(\lambda) + \alpha \left( \frac{1}{\varphi} \right)^{n+1} \log(K^{1/\alpha}L_{n-1})^{-1}.  \nonumber
  \end{eqnarray}
  By Lemma \ref{lemm:diam_estimate}, $\left( \frac{1}{\varphi} \right)^{n+1} \log(K^{1/\alpha}L_{n-1})^{-1} \longrightarrow 0$ as $n \longrightarrow \infty$. We get that
  \begin{equation}
    \label{eq:positive_unbounded}
    \int \log|f'|^+d\tilde{\mu}_P = +\infty.  
  \end{equation}
  
  Now we prove $(ii)$. Suppose by contradiction that 

  \begin{equation}
    \label{eq:finite_neg_int}
    \int \log|f'|^-d\tilde{\mu}_P < +\infty.
  \end{equation}
  By the chain rule
  
  \begin{equation}
    \label{eq:log_relation}
    \log|f'(x)| = \log(\lambda) + \log|h'(T(h^{-1}(x)))| - \log|h'(h^{-1}(x))|,
  \end{equation}
  on $I \setminus \{\tilde{c}\}$
  Since $h'$ has a unique critical point, $\log|h'|$ is bounded away from the
  critical point. In particular, is bounded from above in all $I$. Then $-\log|h'|$ is bounded from below on $I$. In particular, since $\tilde{\mu}_P(\{\tilde{c}\})=0$, the integral
  \[ \int \log|h' \circ h^{-1}| d\tilde{\mu}_P,  \] is defined. Since the only critical points of $f$ are the points in
  $f^{-1}(\{ \tilde{c}\})$, we have that $\log|f'|$ is bounded away from $\{\tilde{c}\} \cup f^{-1}\{\tilde{c} \}$. Let $\tV$ $\subset I \setminus \{\tilde{c}\}$ be a neighborhood of
  $f^{-1}\{ \tilde{c} \}$ such that $\log|f'(x)| < 0$ for $x \in \tV$, then by (\ref{eq:finite_neg_int}) and (\ref{eq:log_relation})
  \[ -\infty < \int_{\tilde{V}}\log|f'|d\tilde{\mu}_P = \log(\lambda)d\tilde{\mu}_P(\tilde{V}) + \int_{\tilde{V}}\left(\log|h'\circ T \circ h^{-1}| - \log|h' \circ h^{-1}|\right) 
    d\tilde{\mu}_P. \]  Since $h^{-1}(\tV)$ is a neighborhood of $T^{-1}(c)$, the function $-\log|h'\circ h^{-1}|$ is bounded on $\tV$. On the other hand, since
  \[ h^{-1} \circ f(x) = T \circ h^{-1}(x) = c,\] we have  $h' \circ T \circ h^{-1}(x) = 0$ if $x \in f^{-1}(\tilde{c})$. Thus
  $h' \circ T \circ h^{-1}(x) \neq 0$ for $x \in I \setminus \tV$. Then $\log|h' \circ T \circ h^{-1}|$ is bounded in $I \setminus \tV $. So
  \begin{equation}
    \label{eq:int_bound}
    \int_{I \setminus \tV}\log|h' \circ T\circ h^{-1}| d\tilde{\mu}_P > -\infty. 
  \end{equation}
  Now,
  \begin{eqnarray}
    -\infty & < & \int_{\tilde{V}}\log|f'|d\tilde{\mu}_P \nonumber \\
            & \leq & \left( \log(\lambda)+ \max_{x \in \tilde{V}}\{-\log|h' \circ h^{-1}(x)|\}\right)\tilde{\mu}_P(\tilde{V}) + 
                     \int_{\tilde{V}}\left(\log|h'\circ T \circ h^{-1}| \right) d\tilde{\mu}_P. \nonumber 
  \end{eqnarray}
  So
  \begin{equation}
    \label{eq:other_inequality}
    \int_{\tilde{V}}\log|h' \circ T \circ h^{-1}| d\tilde{\mu}_P > - \infty.
  \end{equation}
  Together with (\ref{eq:int_bound}) this implies that \[\int\log|h' \circ T \circ h^{-1}| d\tilde{\mu}_P,\] is finite. Since the integral
  \[ \int -\log|h' \circ h^{-1}| d\tilde{\mu}_P, \] is defined, we have 
  
  \begin{eqnarray}
    \int \log|f'|d\tilde{\mu}_P & = & \log(\lambda) + \int \log|h'\circ T \circ h^{-1}|  d\tilde{\mu}_P  + \int- \log|h'\circ  h^{-1}| d\tilde{\mu}_P\nonumber \\
                                & = & \log(\lambda) + \int \log|h' \circ h^{-1} \circ f|  d\tilde{\mu}_P  + \int- \log|h'\circ  h^{-1}| d\tilde{\mu}_P,  \nonumber 
  \end{eqnarray}
  and since $\tilde{\mu}_P$ is $f$ invariant we get \[  \int \log|f'|d\tilde{\mu}_P  = \log(\lambda), \] contradicting (\ref{eq:positive_unbounded}). This contradiction completes
 the proof of part $(ii)$.
 
 Finally we prove $(iii)$. By Proposition \ref{prop:0}, $f$ has a Lorenz-like singularity at~$\tilde{c}$, then there 
 exist $\delta >0$, $\ell^+ >0,$ $\ell^- >0$ and $L>0$ such that \eqref{eq:right_lo-like} holds for every 
 $x \in (\tilde{c}, \tilde{c} + \delta)$ and \eqref{eq:left_lo-like} holds for every 
 $x \in (\tilde{c} - \delta, \tilde{c})$. Let $\ell \= \max \{ \ell^+,\ell^- \},$ and choose 
 $0 < \hat{\delta} \leq \delta$, so that $\log|f'(x)| > 0$ and $\dist(x,\cS(f)) = |x - \tilde{c}|$ for every 
 $x \in (\tilde{c} - \hat{\delta}, \tilde{c} + \hat{\delta}) \setminus \{ \tilde{c} \}$. Let $m \geq 2$ be so that
 $I_m \subset (\tilde{c} - \hat{\delta}, \tilde{c} + \hat{\delta})$. Then, for every 
 $x \in I_m \setminus \{ \tilde{c} \}$ we have \[ \log|f'(x)| \leq \log(L) - \ell \log (\dist(x,\cS(f))).\] So, for every
 $n\geq m$ \[ \int_{h(J_n)} \log|f'| d\tilde{\mu}_P \leq \log(L)\tilde{\mu}_P(J_n) + 
 \ell \int_{h(J_n)} |\log|x -\tilde{c}|| d\tilde{\mu}_P(x). \] By \eqref{eq:non_integrable_bound} and Lemma
 \ref{lemm:diam_estimate} we get that
 \[ +\infty = \int_{h(I_m)} |\log(\dist(x, \cS(f)))| d\tilde{\mu}_P(x), \] so 
 $\log(\dist(x,\cS(f))) \notin L^1(\tilde{\mu}_P)$. This conclude the proof of the proposition.
\end{proof}

\section{Proof of Proposition \ref{prop:2}}
In this section, we will prove Proposition \ref{prop:2}. 
We will use the same notation as in the previous sections.
Recall that $f'$ is not defined at $\tilde{c}$, so for $x$ in $I$ whose orbit contains $\tilde{c}$
the derivative $(f^n)'$ at $x$ does not exist for large $n$.

Let $\alpha^+$ and $\alpha^-$ be the right and left critical orders of $0$ as the critical point of
$h$, and let 
$\alpha \= \max \{\alpha^+, \alpha^- \}$. Let $M >0$ be as in \eqref{eq:l_1} and \eqref{eq:l_2}.
Fix $k >2$ big enough so that (\ref{eq:thm_condition}) holds on $h(I_k)$, and
\eqref{eq:l_1}, and \eqref{eq:l_2} holds on $I_k$.
 

For every $x \in I$ such that $\tilde{c} \notin \cO_f(x)$ we put
\begin{equation}
  \label{eq:up_Lyap_exp}
  \chi_f^+(x) \= \limsup_{n \to \infty} \frac{1}{n} \log|(f^n)'(x)|,
\end{equation}
and 
\begin{equation}
  \label{eq:lo_Lyap_exp}
  \chi_f^-(x) \= \liminf_{n \to \infty} \frac{1}{n} \log|(f^n)'(x)|.
\end{equation}

For any $x \in h(I_k)$ we put $\hat{x} \= h^{-1}(x) \in I_k.$ The proof of the Proposition 
\ref{prop:2} is given after the following lemma.
\begin{lemm}
  \label{lemm:time_bound}
  For every $\hat{x} \in I_k \cap \overline{\cO_T(c)}$ there exists an increasing sequence of positive
  integers $\{n_i\}_{i \geq 1}$ such that 
  \[T^{n_i}(\hat{x}) \in I_{k+i} \hspace{1cm} \text{ and } \hspace{1cm} T^m(\hat{x}) \notin I_{k+i+1},\]
  for all $i \geq 1$ and all $n_i +1 \leq m < n_{i+1}$. Moreover, 
  \begin{equation}
    \label{eq:return_time_bound}
    S(k+i) - S(k) \leq n_i \leq S(k+i+2) - S(k+2) ,
  \end{equation}
    for all $i > 1$.
\end{lemm}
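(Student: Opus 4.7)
The plan is to construct $\{n_i\}$ from the minimality of $T|_{\overline{\cO_T(c)}}$, and then derive the bounds by estimating each increment $\tau_j := n_j - n_{j-1}$ by Fibonacci numbers and telescoping.

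Each $I_{k+i} \cap \overline{\cO_T(c)}$ is one of the $S(k+i)$ pairwise disjoint clopen atoms of $\overline{\cO_T(c)}$ cut out by $M_{k+i}$; in particular it is a non-empty clopen subset of $\overline{\cO_T(c)}$. Since $T$ restricted to $\overline{\cO_T(c)}$ is minimal (\S\ref{FTM}), the orbit of $\hat{x}$ visits each $I_{k+i}$ infinitely often. Setting $n_0 := 0$, I define $n_i$ recursively as the smallest integer strictly greater than $n_{i-1}$ with $T^{n_i}(\hat{x}) \in I_{k+i}$. This produces an increasing sequence; the containment $T^{n_i}(\hat{x}) \in I_{k+i}$ holds by definition, and the gap condition $T^m(\hat{x}) \notin I_{k+i+1}$ for $n_i < m < n_{i+1}$ follows from the minimality built into the definition of $n_{i+1}$.

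The key step is a passage-time estimate: for $\hat{y} \in I_l \cap \overline{\cO_T(c)}$, the first integer $m \geq 1$ with $T^m(\hat{y}) \in I_{l+1}$ satisfies $S(l-1) \leq m \leq S(l+1)$. The lower bound follows because, by (\ref{eq:J_k_I_k}), $\hat{y}$ lies in $I_{l+1}$ or $J_{l+1}$, and for $1 \leq m \leq S(l-1)-1$ the iterate $T^m(\hat{y})$ belongs to an atom of $M_{l+1}$ contained in $I_l^m$, hence disjoint from $I_l \supset I_{l+1}$. For the upper bound, if $\hat{y} \in J_{l+1}$ then the direct endpoint computation
\[
T^{S(l-1)}(J_{l+1}) = [c_{S(l-1)+S(l)}, c_{S(l-1)+S(l+1)+S(l)}] = [c_{S(l+1)}, c_{S(l+3)}],
\]
combined with Lemma \ref{lemm:orbit_order} and the parity analysis of \S\ref{sec:post-critical}, shows that this interval sits inside $I_{l+1}$, so the passage completes in $S(l-1)$ steps. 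If $\hat{y} \in I_{l+1}$, the identification $I_{l+1}^{S(l)} = J_l^{S(l-2)} \subseteq I_l$ (from the proof of Lemma \ref{lemm:J_k}) places $T^{S(l)}(\hat{y})$ in $I_l$; either it already lies in $I_{l+1}$, or it lies in $J_{l+1}$ and the previous case yields $S(l-1)$ further steps, for a total of at most $S(l) + S(l-1) = S(l+1)$.

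Applying this estimate with $l = k+j-1$ gives $S(k+j-2) \leq \tau_j \leq S(k+j)$ for every $j \geq 1$. Summing, and using the telescoping identity $\sum_{l=a}^{b} S(l) = S(b+2) - S(a+1)$ (an immediate consequence of $S(l) = S(l-1) + S(l-2)$), one obtains
\[
S(k+i) - S(k) = \sum_{j=1}^{i} S(k+j-2) \leq n_i \leq \sum_{j=1}^{i} S(k+j) = S(k+i+2) - S(k+2),
\]
which is the desired double inequality. The main obstacle is verifying the inclusion $T^{S(l-1)}(J_{l+1}) = [c_{S(l+1)}, c_{S(l+3)}] \subseteq I_{l+1}$: this requires a parity-dependent argument using the relative positions of $c_{S(m)}$ for $m = l+1, l+2, l+3$, together with the interval identifications and orbit-distance estimates collected in \S\ref{sec:post-critical}.
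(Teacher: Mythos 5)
Your proof is correct and follows essentially the same route as the paper. The paper proves the lemma by induction on $i$, at each step tracking whether $T^{n_i}(\hat{x})$ lies in $I_{k+i+1}$ or $J_{k+i+1}$ (via (\ref{eq:J_k_I_k})) and deducing $S(k+i-1) \leq n_{i+1}-n_i \leq S(k+i+1)$; you isolate that same increment bound as a "passage-time estimate" and obtain (\ref{eq:return_time_bound}) by telescoping $\sum S(l)$ rather than by induction, which is only a cosmetic repackaging of the identical combinatorial content.
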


\begin{proof} We will prove the lemma by induction.
  Let $\hat{x} \in I_k \cap \overline{\cO_T(c)}$. Recall that for any integer $k' \geq 1$, we have that \[ I_{k'}, I^1_{k'}, \ldots I_{k'}^{S(k'-1)-1},J_{k'}, J_{k'}^1, \ldots , J_{k'}^{S(k'-2)-1},  \]
  are pairwise disjoint. Now, by (\ref{eq:J_k_I_k}) \[ \hat{x} \in I_{k+1} \hspace{1cm}\text{ or } \hspace{1cm} \hat{x} \in J_{k+1}.\]
  If  $\hat{x} \in J_{k+1},$ for every $1 \leq m <S(k-1)$ \[ T^m(\hat{x}) \in J_{k+1}^m, \] thus \[ T^m(\hat{x}) \notin I_{k+1} \] and   \[ T^{S(k-1)}(\hat{x}) \in I_{k+1}. \]
  In this case $n_1 = S(k-1)$ satisfies the desired properties. 
  If $\hat{x} \in I_{k+1}, $ for every $1 \leq m < S(k)$ \[T^m(\hat{x}) \in I_{k+1}^m ,\] thus \[T^m(\hat{x}) \notin I_{k+1}, \]
  and  \[ T^{S(k)}(\hat{x}) \in I_{k+1} \hspace{1cm}\text{ or } \hspace{1cm} T^{S(k)}(\hat{x}) \in J_{k+1}.\] In the former case $n_1=S(k)$ satisfies the desired properties. In the later
  case we have that for $1 \leq m < S(k) + S(k-1) = S(k+1)$ \[ T^m(\hat{x}) \notin I_{k+1}\] and \[ T^{S(k+1)}(\hat{x}) \in I_{k+1}.\] So $n_1= S(k+1)$ satisfies the desired properties.
  So we have \[S(k-1) \leq n_1 \leq S(k+1). \]
  Now suppose that for some $i \geq 1$ there is $n_i$ satisfying the conclusions of the lemma. Thus \[ T^{n_i}(\hat{x}) \in I_{k+i}\] and \[S(k+i)-S(k) \leq n_i \leq S(k+i+2)-S(k+2).\]
  By (\ref{eq:J_k_I_k}) \[ T^{n_i}(\hat{x}) \in I_{k+i+1} \hspace{1cm}\text{ or } \hspace{1cm} T^{n_i}(\hat{x}) \in J_{k+i+1}.\]
  If  $T^{n_i}(\hat{x}) \in J_{k+i+1},$ for every $1 \leq m <S(k+i-1)$ \[ T^{m+n_i}(\hat{x}) \in J_{k+i+1}^m, \] thus \[ T^{m+n_i}(\hat{x}) \notin I_{k+i+1} \] and
  \[ T^{S(k+i-1) +n_i}(\hat{x}) \in I_{k+i+1}. \] In this case $n_{i+1} = S(k+i-1) + n_i$ satisfies the desired properties. 
  If $T^{n_i}(\hat{x}) \in I_{k+i+1}, $ for every $1 \leq m < S(k+i)$ \[T^{m+n_i}(\hat{x}) \in I_{k+i+1}^m ,\] thus \[T^{m+n_i}(\hat{x}) \notin I_{k+i+1}, \]
  and  \[ T^{S(k+i) +n_i}(\hat{x}) \in I_{k+i+1} \hspace{1cm}\text{ or } \hspace{1cm} T^{S(k+i)+n_i}(\hat{x}) \in J_{k+i+1}.\] In the former case $n_{i+1}= S(k+i) + n_i$ satisfies the
  desired properties. In the later case we have that for $1 \leq m < S(k+i) + S(k+i-1) = S(k+i+1)$ \[ T^{m+n_i}(\hat{x}) \notin I_{k+i+1} \] and
  \[ T^{S(k+i+1) +n_i}(\hat{x}) \in I_{k+i+1}.\] So $n_{i+1}= S(k+i+1) + n_i$ satisfies the desired properties.
  So we have \[S(k+i-1) \leq n_{n_i+1} \leq S(k+i+1) + n_i. \] Since $n_i$ satisfies (\ref{eq:return_time_bound})
  \[S(k+i+1) - S(k) \leq n_{i+1} \leq S(k+i+3) - S(k+2).  \] This conclude the proof of the lemma.  
\end{proof}

\begin{proof}[Proof of Proposition \ref{prop:2}]
  Let $x \in \overline{\cO_f(\tilde{c})}$, with $\tilde{c} \notin \cO_f(x)$. Then
  $\hat{x} = h^{-1}(x) \in \overline{\cO_T(c)}$, and $ c \notin \cO_T(\hat{x})$. 
  By the chain rule, we have that for very $n \geq 1$
\begin{equation}
  \label{eq:n_derivative}
  (f^n)'(x) = \prod_{i=0}^{n-1} \lambda \frac{h'(T(T^i(\hat{x})))}{h'(T^i(\hat{x}))} = 
  \lambda^n \frac{h'(T^n(\hat{x}))}{h'(\hat{x})}. 
\end{equation}
Then,
\begin{equation}
  \label{eq:n_derivative_1}
  \frac{1}{n}\log|(f^n)'(x)| = \log \lambda + \frac{1}{n} \log |h'(T^n(\hat{x}))| - 
  \frac{1}{n} \log |h'(\hat{x})|,
\end{equation}
for every $n \geq 1$. So, using \eqref{eq:up_Lyap_exp} and \eqref{eq:lo_Lyap_exp}, we get
\begin{equation}
  \label{eq:up_Lyap_exp_1}
  \chi_f^+(x)  = \log \lambda + \limsup_{n \to \infty} \frac{1}{n} \log|h'(T^n(\hat{x}))|,
\end{equation}
and 
\begin{equation}
  \label{eq:lo_Lyap_exp_1}
  \chi_f^-(x)  = \log \lambda +  \liminf_{n \to \infty} \frac{1}{n} \log|h'(T^n(\hat{x}))|.
\end{equation}

  Now, since $\hat{x} \in \overline{\cO_T(c)}$, we have that $\hat{x}$ belongs to one of the following
  sets \[ I_k,I_k^1, \ldots, I_k^{S(k-1)-1}, J_k, \ldots , J_k^{S(k-2)-1}.\]
  So there exists $0 \leq l_k(x) < S(k)$ such that $T^{l_k(x)}(\hat{x})  \in I_k$.
  Let $\{n_i\}_{i \geq 1}$ be as in Lemma~\ref{lemm:time_bound}, for $T^{l_k(x)}(x)$. 
  Note that for every $i \geq 1$
  \[T^{n_i+l_k(x) +1}(\hat{x}) \in I_{k+i}^1 \subset [c_{S(k)+1}, c_1]. \] Then by
  (\ref{eq:seq_bound}) and (\ref{eq:up_Lyap_exp_1}) we have that 
  \[ \log \lambda \leq \chi_f^+(x). \]

Now by (\ref{eq:return_time_bound}) we have that
\begin{equation}
  \label{eq:rho_bounds}
  \frac{1}{S(k+i+2)} \leq \frac{1}{n_i}.
\end{equation}
Also, since for every $i \geq 1$ \[ T^{n_i+l_k(x)}(\hat{x}) \in I_{k+i},\] by (\ref{def:fibonacci})
and (\ref{eq:I_K}), we get
\begin{equation}
  \label{eq:bound_1}
  |T^{n_i+l_k(x)}(\hat{x})| \leq |c_{S(k+i)} | = |D_{k+i}|.
\end{equation}
By (\ref{eq:thm_condition}) we have 
\begin{equation}
  \label{eq:derivative_bound}
  \frac{1}{\lambda K|T^{n_i+l_k(x)}(\hat{x})|^{\alpha}} \leq 
  \frac{|h'(T^{n_i+l_k(x)+1}(\hat{x}))|}{|h'(T^{n_i+l_k(x)}(\hat{x}))|}. 
\end{equation}
Combining (\ref{eq:seq_bound}), (\ref{eq:bound_1}) and (\ref{eq:derivative_bound}) we get
\begin{equation}
  \label{eq:bound_2}
  |h'(T^{n_i+l_k(x)}(\hat{x}))| \leq \lambda K W_2 |D_{k+i}|^{\alpha}.
\end{equation}
Since $|D_{k+i}| \longrightarrow 0$ as $i \longrightarrow \infty$, there exists $i' \geq 1$ such that for every $i \geq i'$
\[|D_{k+i}| < \left( \frac{1}{\lambda K W_2} \right)^{1/\alpha}. \]
Then for every $i \geq i'$ from (\ref{eq:bound_2}) we get
\[\log |h'(T^{n_i+l_k(x)}(\hat{x}))| \leq \log \left( \lambda KW_2|D_{k+i}|^{\alpha} \right) < 0.  \]
By the above and (\ref{eq:rho_bounds})
\begin{equation}\nonumber
  \frac{1}{n_i} \log |h'(T^{n_i+l_k(x)}(\hat{x}))| \leq \frac{1}{S(k+i+2)} \log (\lambda K W_2 |D_{k+i}|^{\alpha}).
\end{equation}
Taking limit as $i \longrightarrow \infty$,
\begin{equation}\nonumber
  \lim_{i \to \infty} \frac{1}{n_i} \log |h'(T^{n_i+l_k(x)}(\hat{x}))| \leq \lim_{i \to \infty} \frac{1}{S(k+i+2)} \log |D_{k+i}|^{\alpha}.
\end{equation}
Using Lemma \ref{lemm:diam_estimate}
\begin{eqnarray} \nonumber
  \lim_{i \to \infty} \frac{1}{S(k+i+2)} \log|D_{k+i}|^{\alpha}
  &=&\lim_{i \to \infty}\frac{1}{S(k+i+2)}\log \lambda^{- \alpha S(k+i+1)}   \\ \nonumber
  &=& -\alpha \log \lambda \lim_{i \to \infty} \frac{S(k+i+1)}{S(k+i+2)} \\ \nonumber
  &=& -\frac{\alpha}{\varphi} \log \lambda.
\end{eqnarray}
Then by (\ref{eq:lo_Lyap_exp_1}) \[ \chi_f^-(x) \leq 
(1- \frac{\alpha}{\varphi})\log \lambda < \log \lambda. \] 
This conclude the proof of the proposition.

\end{proof}



\section{Proof of Proposition \ref{prop:3}}
In this section we will give the proof of Proposition \ref{prop:3}. We will use the same notation as in the previous
sections. 

For every $k \geq 1$, put \[ D_k^+ \= (|c_{S(k)}|, c) \hspace{1cm} \text{ and } \hspace{1cm} D_k^- \= (-|c_{S(k)}|, c)\]
\[ A_k^+ \= D_k^+ \setminus \overline{D_{k+1}^+} \hspace{1cm} \text{ and } \hspace{1cm} 
A_k^- \= D_k^- \setminus \overline{D_{k+1}^-}. \] Observe that 
\begin{equation}
\label{eq:rec_obs}
 |A_k^+|=|D_k| - |D_{k+1}| = |A_k^-|. 
\end{equation}
\begin{lemm}
\label{lemm:rec_0}
There exist $\alpha''_+$, $\alpha''_-$, $\alpha'_+$, $\alpha'_-$, $K$, and $Q$ positive real numbers such that
\begin{equation}
    \label{eq:rec_lemm_+}
    \lambda^{-S(k)\alpha''_+}Q^{-1} \leq |h(A_k^+)| \leq \lambda^{-S(k)\alpha'_+}Q,
\end{equation}
and
\begin{equation}
    \label{eq:rec_lemm_-}
    \lambda^{-S(k)\alpha''_-}Q^{-1} \leq |h(A_k^-)| \leq \lambda^{-S(k)\alpha'_-}Q,
\end{equation}
for every $k \geq K$.
\end{lemm}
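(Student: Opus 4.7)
The plan is to apply the mean value theorem to $h$ on each of the intervals $A_k^{+}$ and $A_k^{-}$, both of which lie at positive distance from the critical point $0$ of $h$ once $k$ is large, and then to convert the resulting bounds into the desired powers of $\lambda^{-S(k)}$ using Lemma~\ref{lemm:diam_estimate} and the Fibonacci identity $S(k+2) = S(k+1) + S(k)$.

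First I would fix $K$ so large that $I_k \subset (-\delta, \delta)$ for all $k \geq K$, so that \eqref{eq:l_1} and \eqref{eq:l_2} are available on $A_k^{\pm}$. Lemma~\ref{lemm:diam_estimate} then supplies constants $b_1, b_2 > 0$ with
\begin{equation*}
b_1 \lambda^{-S(k+1)} \leq |D_k| \leq b_2 \lambda^{-S(k+1)} \quad \text{for every } k \geq K.
\end{equation*}
Combining this with $|A_k^{\pm}| = |D_k| - |D_{k+1}|$ and the observation that $|D_{k+1}|/|D_k|$ is of order $\lambda^{-S(k)}$ and hence tends to $0$, one obtains, after possibly enlarging $K$ and shrinking $b_1$, a two-sided comparison $b_1 \lambda^{-S(k+1)} \leq |A_k^{\pm}| \leq b_2 \lambda^{-S(k+1)}$.

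Next I would apply the mean value theorem on $A_k^{+}$ to obtain $\xi_k \in A_k^{+}$ with $|c_{S(k+1)}| \leq \xi_k \leq |c_{S(k)}|$ and $|h(A_k^{+})| = |h'(\xi_k)|\cdot|A_k^{+}|$. Inserting the pointwise bound \eqref{eq:l_1} at $\xi_k$ and then substituting the extremal values of $\xi_k$ together with the bounds just obtained for $|A_k^{+}|$, $|c_{S(k)}|$, $|c_{S(k+1)}|$ yields
\begin{equation*}
e^{-M} b_1^{\alpha^+ + 1}\, \lambda^{-S(k+2)\alpha^+ - S(k+1)} \leq |h(A_k^{+})| \leq e^{M} b_2^{\alpha^+ + 1}\, \lambda^{-S(k+1)(\alpha^+ + 1)}.
\end{equation*}
To bring these into the form stated in the lemma, I would set $\alpha'_+ := \alpha^+ + 1$ (legal since $S(k+1) \geq S(k)$ and $\lambda > 1$) and choose any $\alpha''_+ > \varphi^2 \alpha^+ + \varphi$; the asymptotic $S(k+j)/S(k) \to \varphi^j$ then gives $S(k+2)\alpha^+ + S(k+1) \leq \alpha''_+ S(k)$ for all $k$ sufficiently large, so that $\lambda^{-S(k+2)\alpha^+ - S(k+1)} \geq \lambda^{-\alpha''_+ S(k)}$. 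All finite prefactors, together with whatever loss is incurred by these two inequalities, can be collected into a single constant $Q$. The same argument with \eqref{eq:l_2} and $\alpha^-$ replacing \eqref{eq:l_1} and $\alpha^+$ produces $\alpha'_- := \alpha^- + 1$ and any $\alpha''_- > \varphi^2 \alpha^- + \varphi$ for the corresponding estimate on $|h(A_k^{-})|$.

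The main input is Lemma~\ref{lemm:diam_estimate}; once that is in hand, everything else is Fibonacci bookkeeping. I expect the only mildly delicate point to be justifying the lower comparison $|A_k^{\pm}| \geq b_1 \lambda^{-S(k+1)}$ uniformly in $k$, despite the subtraction defining $A_k^{\pm}$; but this is immediate from $|D_{k+1}|$ being of order $\lambda^{-S(k+2)}=\lambda^{-S(k+1)-S(k)}$, and hence negligible compared to $|D_k|$. No additional control on the critical orbit is needed, precisely because the intervals $A_k^{\pm}$ are engineered to avoid a neighborhood of $0$ shrinking at the same exponential rate as $|A_k^{\pm}|$ itself.
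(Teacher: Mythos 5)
Your proof is correct and follows essentially the same route as the paper: the mean value theorem applied to $h$ on $A_k^{\pm}$, the non-flatness bounds \eqref{eq:l_1}--\eqref{eq:l_2} evaluated at the intermediate point, the diameter asymptotics from Lemma~\ref{lemm:diam_estimate}, and Fibonacci arithmetic to convert exponents of the form $S(k+2)\alpha^{+}+S(k+1)$ into a multiple of $S(k)$. The only cosmetic difference is that you bound $|A_k^{\pm}|=|D_k|-|D_{k+1}|$ directly by a two-sided comparison with $\lambda^{-S(k+1)}$, whereas the paper keeps the factors $\bigl(|D_k|/|D_{k+1}|-1\bigr)$ and $\bigl(1-|D_{k+1}|/|D_k|\bigr)$ symbolic before substituting the ratio bounds; these are equivalent, and indeed your lower-bound exponent $-S(k+2)\alpha^{+}-S(k+1)$ and your threshold $\alpha''_{+}>\varphi^{2}\alpha^{+}+\varphi$ coincide (using $\varphi^{2}=\varphi+1$) with the paper's $-S(k+2)(\alpha^{+}+1)+S(k)$ and $\alpha''_{+}=(\varphi+\varepsilon)^{2}(\alpha^{+}+1)-1$.
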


\begin{proof}
By Lemma \ref{lemm:diam_estimate}, there exists $\beta >0$ such that 
\[ \lim_{n \to \infty} \lambda^{S(k+1)}|D_k| = \beta. \] Let $\varepsilon >0$ be small enough so that 
$(\beta - \varepsilon)/(\beta + \varepsilon) \geq 1/2$. Let $M > 0$ be as in \eqref{eq:l_1} and \eqref{eq:l_2}. 
Fix $K > 0$ big enough so that, for every $k \geq K$, 
\eqref{eq:l_1}, \eqref{eq:l_2} holds on $A_k$, and the following holds:

\begin{equation}
    \label{eq:rec_1}
    \lambda^{-S(k+1)}(\beta - \varepsilon) \leq |D_k| \leq \lambda^{-S(k+1)}(\beta + \varepsilon),
\end{equation}

\begin{equation}
    \label{eq:rec_2}
     \lambda^{-S(k)} \leq \frac{1}{4},
\end{equation}
and
\begin{equation}
    \label{eq:rec_3}
     \frac{S(k+1)}{S(k)} < \varphi + \varepsilon.
\end{equation}
By \eqref{eq:rec_1}, with $k$ replaced by $k+1$, we get
\begin{equation}
    \label{eq:rec_7}
    \lambda^{S(k+2)} \frac{1}{\beta + \varepsilon}\leq \frac{1}{|D_{k+1}|} \leq \lambda^{S(k+2)} 
    \frac{1}{\beta - \varepsilon}.
\end{equation}
Combining \eqref{eq:rec_1} and  \eqref{eq:rec_7}, we get
\begin{equation}
    \label{eq:rec_8}
    \lambda^{S(k)} \frac{\beta - \varepsilon}{\beta + \varepsilon} \leq \frac{|D_{k}|}{|D_{k+1}|} \leq 
    \lambda^{S(k)} \frac{\beta + \varepsilon}{\beta- \varepsilon}.
\end{equation}
For $k \geq K$, using the mean value theorem on the function $h \colon A_k^+ \longrightarrow h(A_k^+)$, there exists
$\gamma^+ \in A_k^+$ such that
\begin{equation}
    \label{eq:rec_4}
     \frac{|h(A_k^+)|}{|A_k^+|} = |h'(\gamma^+)|.
\end{equation}
Let $\alpha^+$, be the right order of $0$ as a critical point of $h$. By \eqref{eq:l_1}, we have
\begin{equation}
    \label{eq:rec_5}
    e^{-M}|\gamma^+|^{\alpha^+} \leq |h'(\gamma^+)| \leq e^M|\gamma^+|^{\alpha^+}. 
\end{equation}
Since $\gamma^+ \in A_k^+$, we have that \[ |D_{k+1}| \leq |\gamma^+| \leq |D_k|.\] Then, by \eqref{eq:rec_1},
\eqref{eq:rec_2}, \eqref{eq:rec_4} and \eqref{eq:rec_5} we have that
\begin{equation}
    \label{eq:rec_6}
    e^{-M}|D_{k+1}|^{\alpha^++1} \left( \frac{|D_k|}{|D_{k+1}|} - 1 \right) \leq |h(A_k^+)| \leq 
    e^M|D_k|^{\alpha^+ + 1} \left( 1 - \frac{|D_{k+1}|}{|D_k|} \right).
\end{equation}
Using \eqref{eq:rec_8} in \eqref{eq:rec_6}, we obtain
\begin{multline}
    \label{eq:rec_10}
    e^{-M}(\beta - \varepsilon)^{\alpha^+ + 1} \lambda^{-S(k+2)(\alpha^+ +1)} \left( \lambda^{S(k)} \frac{\beta -
    \varepsilon}{\beta + \varepsilon} - 1 \right) \leq  |h(A_k^+)| \leq \\ 
    e^M(\beta + \varepsilon)^{\alpha^+ + 1} \lambda^{-S(k+1)(\alpha^+ +1)} \left( 1 - \lambda^{-S(k)} \frac{\beta -
    \varepsilon}{\beta + \varepsilon}  \right).
\end{multline}
Put \[ Q_1 \= e^{-M} \left(\frac{\beta }{ 3}\right)^{\alpha^+ +1} \frac{1}{4} \hspace{1cm}  \text{ and } \hspace{1cm} 
Q_2 \= e^M 2 \beta. \] By \eqref{eq:rec_2} and since $(\beta - \varepsilon)/(\beta + \varepsilon) \geq 1/2$, we have that
\[ Q_1 \leq e^{-M}(\beta - \varepsilon)^{\alpha^+ + 1}  \left( \frac{\beta - \varepsilon}{\beta + \varepsilon} - 
\lambda^{-S(k)} \right), \] and 
\[ e^M(\beta + \varepsilon)^{\alpha^+ + 1} \left( 1 - \lambda^{-S(k)} \frac{\beta - \varepsilon}{\beta + \varepsilon} 
\right) \leq Q_2,\] for every $k \geq K$. Then 
\begin{equation}
    \label{eq:rec_11}
    \lambda^{-S(k+2)(\alpha^+ + 1)} \lambda^{S(k)} Q_1 \leq |h(A_k^+)| \leq \lambda^{-S(k+1)(\alpha^+ + 1)} Q_2.
\end{equation}
Finally, put \[\alpha'_+ \= \alpha^+ + 1 \hspace{0.5cm} \text{and} \hspace{0.5cm} \alpha''_+ \= (\varphi +
\varepsilon)^2(\alpha^+ + 1) - 1. \] Since 
$S(k) = S(k+2) - S(k-1)$, by \eqref{eq:rec_3} we have
\begin{eqnarray}
    -S(k+2)(\alpha^+ +1) + S(k) & =& -S(k) \left( \frac{S(k+2)}{S(k)}(\alpha^+ + 1) -1 \right) \nonumber \\
                            &\geq& - S(k)\alpha''_+. \nonumber
\end{eqnarray}
Then, taking $Q \= \max \{Q_1^{-1}, Q_2 \}$ we have
\[ \lambda^{-S(k)\alpha''_+} Q^{-1} \leq |h(A_k^+)| \leq \lambda^{S(k)\alpha'_+} Q.\]

In the same way we can prove \eqref{eq:rec_lemm_-}.
\end{proof}

\begin{proof}[Proof of Proposition \ref{prop:3}]
Let $\alpha''_+$, $\alpha''_-$, $\alpha'_+$, $\alpha'_-$, $K$, and $Q$, be as in Lemma \ref{lemm:rec_0}. By
\eqref{eq:rec_obs}, we have that 

\begin{equation}
    \label{eq:rec_12}
    \sum_{m=0}^{n}|h(A_{k+m}^+)| = |h(D_k^+)| - |h(D_{k+n+1}^+)|,
\end{equation}
for every $n \geq 0$. Then by \eqref{eq:rec_lemm_+} and \eqref{eq:rec_12} we get
\begin{equation}
    \label{eq:rec_13}
    Q^{-1}\sum_{m=0}^{n}\lambda^{-S(k+m)\alpha''_+} \leq |h(D_{k}^+)| - |h(D_{k+n+1}^+)| \leq Q \sum_{m=0}^n 
    \lambda^{-S(k+m)\alpha'_+},
\end{equation}
for every $n \geq 0$. Now, for $m \geq 0$ we have that 
\begin{equation}
    \label{eq:rec_14}
    S(k+m) = S(k) + \sum_{j=0}^{m-1}S(k +j-1).
\end{equation}
Put
\[F_{k+m} \= \sum_{j=0}^{m-1}S(k+j-1), \hspace{0.5cm} 
\text{and} \hspace{0.5 cm} \sigma'(k) \= 1 + \sum_{i=0}^{\infty}\lambda^{-\alpha'_+F_{k+i}}.\]
Then, combining \eqref{eq:rec_13} and \eqref{eq:rec_14} we obtain
\begin{equation}
    \label{eq:rec_15}
    \lambda^{-S(k)\alpha''_+}Q^{-1} \leq
    |h(D_k^+)|-|h(D_{k+n+1}^+)| \leq  \lambda^{-S(k)\alpha'_+} Q \sigma'(k).
\end{equation}
If we put
\[\Theta \= Q \left(1 + \sum_{i = 0}^{\infty} \lambda^{-\alpha''_+S(i)}\right), \] then for every $k \geq K$ and every 
$m \geq 0 $ we have
\[Q\sigma'(k) \leq \Theta, \hspace{0.5cm} \text{ and } \hspace{0.5cm}  \Theta^{-1} \leq Q^{-1}. \] 
Then
\begin{equation}
    \label{eq:rec_16}
    \lambda^{-S(k)\alpha''_+}\Theta^{-1} \leq
    |h(D_k^+)|-|h(D_{k+n+1}^+)| \leq  \lambda^{-S(k)\alpha'_+} \Theta.
\end{equation}
Since $|D_{k+n+1}| \longrightarrow 0$ as $n \longrightarrow \infty$, and $h$ is continuous, taking the limit in
\eqref{eq:rec_16} as $n \longrightarrow \infty$ we obtain
\begin{equation}
    \label{eq:rec_17}
    \lambda^{-S(k)\alpha''_+}\Theta^{-1}  \leq
    |h(D_k^+)| \leq  \lambda^{-S(k)\alpha'_+} \Theta.
\end{equation}
In the same way we can prove that 
\begin{equation}
    \label{eq:rec_18}
    \lambda^{-S(k)\alpha''_-}\Theta^{-1} \leq
    |h(D_k^-)| \leq  \lambda^{-S(k)\alpha'_-} \Theta.
\end{equation}
Finally, put $\alpha'' \= \max \{ \alpha''_- , \alpha''_+\}$, and $\alpha' \= \min \{ \alpha'_-, \alpha'_+\}$. For any
$k \geq K$ we have that \[ |f^{S(k)}(\tilde{c}) - \tilde{c}| = |h(D_k^+)| \hspace{1cm}  \text{or} \hspace{1cm} 
 |f^{S(k)}(\tilde{c}) - \tilde{c}| = |h(D_k^-)|. \] In any case, by \eqref{eq:rec_17} and \eqref{eq:rec_18}
the result follows.

\end{proof}


\bibliography{my_biblio}{}

\begin{thebibliography}{BKNvS96}

\bibitem[ALV09]{ALV09}
V\'{\i}tor Ara\'{u}jo, Stefano Luzzatto, and Marcelo Viana.
\newblock Invariant measures for interval maps with critical points and
  singularities.
\newblock {\em Adv. Math.}, 221(5):1428--1444, 2009.

\bibitem[BH92]{BH92}
Bodil Branner and John~H. Hubbard.
\newblock The iteration of cubic polynomials. {II}. {P}atterns and
  parapatterns.
\newblock {\em Acta Math.}, 169(3-4):229--325, 1992.

\bibitem[BKNvS96]{BKNvS96}
H.~Bruin, G.~Keller, T.~Nowicki, and S.~van Strien.
\newblock Wild {C}antor attractors exist.
\newblock {\em Ann. of Math. (2)}, 143(1):97--130, 1996.

\bibitem[Bru03]{Br03}
H.~Bruin.
\newblock Minimal {C}antor systems and unimodal maps.
\newblock volume~9, pages 305--318. 2003.
\newblock Dedicated to Professor Alexander N. Sharkovsky on the occasion of his
  65th birthday.

\bibitem[CE83]{CE83}
P.~Collet and J.-P. Eckmann.
\newblock Positive {L}iapunov exponents and absolute continuity for maps of the
  interval.
\newblock {\em Ergodic Theory Dynam. Systems}, 3(1):13--46, 1983.

\bibitem[CRL10]{CRL10}
Mar\'{\i}a~Isabel Cortez and Juan Rivera-Letelier.
\newblock Invariant measures of minimal post-critical sets of logistic maps.
\newblock {\em Israel J. Math.}, 176:157--193, 2010.

\bibitem[dMvS93]{MevS93}
Welington de~Melo and Sebastian van Strien.
\newblock {\em One-dimensional dynamics}, volume~25 of {\em Ergebnisse der
  Mathematik und ihrer Grenzgebiete (3) [Results in Mathematics and Related
  Areas (3)]}.
\newblock Springer-Verlag, Berlin, 1993.

\bibitem[Dob14]{Dob14}
Neil Dobbs.
\newblock On cusps and flat tops.
\newblock {\em Ann. Inst. Fourier (Grenoble)}, 64(2):571--605, 2014.

\bibitem[Dob15]{Dob15}
Neil Dobbs.
\newblock Pesin theory and equilibrium measures on the interval.
\newblock {\em Fund. Math.}, 231(1):1--17, 2015.

\bibitem[DPU96]{DPU96}
Manfred Denker, Feliks Przytycki, and Mariusz Urba\'{n}ski.
\newblock On the transfer operator for rational functions on the {R}iemann
  sphere.
\newblock {\em Ergodic Theory Dynam. Systems}, 16(2):255--266, 1996.

\bibitem[GS14]{GS14}
Bing Gao and Weixiao Shen.
\newblock Summability implies {C}ollet-{E}ckmann almost surely.
\newblock {\em Ergodic Theory Dynam. Systems}, 34(4):1184--1209, 2014.

\bibitem[GS18]{GS18}
Rui Gao and Wei~Xiao Shen.
\newblock Decay of correlations for {F}ibonacci unimodal interval maps.
\newblock {\em Acta Math. Sin. (Engl. Ser.)}, 34(1):114--138, 2018.

\bibitem[GW79]{GW79}
John Guckenheimer and R.~F. Williams.
\newblock Structural stability of {L}orenz attractors.
\newblock {\em Inst. Hautes \'{E}tudes Sci. Publ. Math.}, (50):59--72, 1979.

\bibitem[HK90]{HK90}
Franz Hofbauer and Gerhard Keller.
\newblock Some remarks on recent results about {$S$}-unimodal maps.
\newblock volume~53, pages 413--425. 1990.
\newblock Hyperbolic behaviour of dynamical systems (Paris, 1990).

\bibitem[Hub93]{Hub93}
J.~H. Hubbard.
\newblock Local connectivity of {J}ulia sets and bifurcation loci: three
  theorems of {J}.-{C}. {Y}occoz.
\newblock In {\em Topological methods in modern mathematics ({S}tony {B}rook,
  {NY}, 1991)}, pages 467--511. Publish or Perish, Houston, TX, 1993.

\bibitem[KH95]{HaKa95}
Anatole Katok and Boris Hasselblatt.
\newblock {\em Introduction to the modern theory of dynamical systems},
  volume~54 of {\em Encyclopedia of Mathematics and its Applications}.
\newblock Cambridge University Press, Cambridge, 1995.
\newblock With a supplementary chapter by Katok and Leonardo Mendoza.

\bibitem[KN95]{KN95}
Gerhard Keller and Tomasz Nowicki.
\newblock Fibonacci maps re(al)visited.
\newblock {\em Ergodic Theory Dynam. Systems}, 15(1):99--120, 1995.

\bibitem[Led81]{Led81}
Fran\c{c}ois Ledrappier.
\newblock Some properties of absolutely continuous invariant measures on an
  interval.
\newblock {\em Ergodic Theory Dynam. Systems}, 1(1):77--93, 1981.

\bibitem[Lim20]{Lim18}
Yuri Lima.
\newblock Symbolic dynamics for one dimensional maps with nonuniform expansion.
\newblock {\em Ann. Inst. H. Poincar\'{e} Anal. Non Lin\'{e}aire},
  37(3):727--755, 2020.

\bibitem[LM93]{LyMi93}
Mikhail Lyubich and John Milnor.
\newblock The {F}ibonacci unimodal map.
\newblock {\em J. Amer. Math. Soc.}, 6(2):425--457, 1993.

\bibitem[LM13]{LM13}
Stefano Luzzatto and Ian Melbourne.
\newblock Statistical properties and decay of correlations for interval maps
  with critical points and singularities.
\newblock {\em Comm. Math. Phys.}, 320(1):21--35, 2013.

\bibitem[LS12]{LeSw12}
Genadi Levin and Grzegorz \'{S}wi\c{a}tek.
\newblock Common limits of {F}ibonacci circle maps.
\newblock {\em Comm. Math. Phys.}, 312(3):695--734, 2012.

\bibitem[LT99]{LuTu99}
Stefano Luzzatto and Warwick Tucker.
\newblock Non-uniformly expanding dynamics in maps with singularities and
  criticalities.
\newblock {\em Inst. Hautes \'{E}tudes Sci. Publ. Math.}, (89):179--226 (2000),
  1999.

\bibitem[Mil00]{Mil92}
John Milnor.
\newblock Local connectivity of {J}ulia sets: expository lectures.
\newblock In {\em The {M}andelbrot set, theme and variations}, volume 274 of
  {\em London Math. Soc. Lecture Note Ser.}, pages 67--116. Cambridge Univ.
  Press, Cambridge, 2000.

\bibitem[MT88]{MiTh88}
John Milnor and William Thurston.
\newblock On iterated maps of the interval.
\newblock In {\em Dynamical systems ({C}ollege {P}ark, {MD}, 1986--87)}, volume
  1342 of {\em Lecture Notes in Math.}, pages 465--563. Springer, Berlin, 1988.

\bibitem[Ped20]{Ped20}
Fabiola Pedreira.
\newblock {\em On the behaviour of the singular values of expanding Lorenz
  maps}.
\newblock PhD thesis, Universidade Federal da Bahia, 2020.

\bibitem[Prz93]{Prz93}
Feliks Przytycki.
\newblock Lyapunov characteristic exponents are nonnegative.
\newblock {\em Proc. Amer. Math. Soc.}, 119(1):309--317, 1993.

\bibitem[RL20]{RL20}
Juan Rivera-Letelier.
\newblock Asymptotic expansion of smooth interval maps.
\newblock {\em Ast\'{e}risque}, (416, Quelques aspects de la th\'{e}orie des
  syst\`emes dynamiques: un hommage \`a Jean-Christophe Yoccoz.II):33--63,
  2020.

\bibitem[Sma07]{Sm07}
Daniel Smania.
\newblock Puzzle geometry and rigidity: the {F}ibonacci cycle is hyperbolic.
\newblock {\em J. Amer. Math. Soc.}, 20(3):629--673, 2007.

\bibitem[Tsu93]{Tsu93}
Masato Tsujii.
\newblock Positive {L}yapunov exponents in families of one-dimensional
  dynamical systems.
\newblock {\em Invent. Math.}, 111(1):113--137, 1993.

\end{thebibliography}
\bibliographystyle{alpha}
\end{document}